\newcommand{\cF}{\mathcal{F}}
\newcommand{\cL}{\mathcal{L}}
\newcommand{\cM}{\mathcal{M}}
\newcommand{\cN}{\mathcal{N}}
\newcommand{\cX}{\mathcal{X}}
\newcommand{\I}{\mathbb{I}}
\newcommand{\N}{\mathbb{N}}
\newcommand{\Ns}{\mathbb{N}^*}
\newcommand{\R}{\mathbb{R}}
\newcommand{\Rd}{\R^d}
\newcommand{\ba}{\boldsymbol{a}}
\newcommand{\bx}{\boldsymbol{x}}
\newcommand{\bxs}{\boldsymbol{x}^\star}
\newcommand{\by}{\boldsymbol{y}}
\newcommand{\bzero}{\boldsymbol{0}}
\DeclareMathOperator*{\argmax}{argmax}
\newcommand{\dif}{\,\mathrm{d}}
\DeclareMathOperator*{\card}{card}
\newcommand{\e}{\varepsilon}
\renewcommand{\epsilon}{\e}
\newcommand{\lrb}[1]{\left(#1\right)}
\newcommand{\brb}[1]{\bigl(#1\bigr)}
\newcommand{\Brb}[1]{\Bigl(#1\Bigr)}
\newcommand{\bsb}[1]{\bigl[#1\bigr]}
\newcommand{\bcb}[1]{\bigl\{#1\bigr\}}
\newcommand{\bce}[1]{\bigl\lceil#1\bigr\rceil}
\newcommand{\lfl}[1]{\left\lfloor#1\right\rfloor}
\newcommand{\labs}[1]{\left\lvert#1\right\rvert}
\newcommand{\babs}[1]{\bigl\lvert#1\bigr\rvert}
\newcommand{\lno}[1]{\left\lVert#1\right\rVert}
\newcommand{\lip}{Lipschitz}
\newcommand{\leb}{Lebesgue}
\newcommand{\mink}{Minkowski}
\newcommand{\piya}{Piyavskii}
\newcommand{\vol}{\mathrm{vol}}
\newcommand{\representative}{representative}
\newcommand{\s}{\subset}
\newcommand{\iop}{\infty}
\newcommand{\eo}{\e_0}
\newcommand{\me}{m_\e}
\newcommand{\fracc}[2]{#1 / #2}
\newcommand{\SNC}{S_{\mathrm{NC}}}
\newcommand{\SC}{S_{\mathrm{C}}}
\newcommand{\Lip}{\mathrm{Lip}}
\newcommand{\err}{\mathrm{err}}
\newcommand{\suppl}{Supplementary Material}
\newcommand{\cdoo}{\mathrm{c.DOO}}
\newcommand{\ncdoo}{\mathrm{nc.DOO}}
\newcommand{\papertitle}{Instance-Dependent Bounds for Zeroth-order \lip{} Optimization with Error Certificates}
\newtheorem{assumption}{Assumption}
    \crefname{assumption}{Assumption}{Assumptions}
\newtheorem{definition}{Definition}
    \crefname{definition}{Definition}{Definitions}
\newtheorem{theorem}{Theorem}
\newtheorem{lemma}{Lemma}
\newtheorem{proposition}{Proposition}
\newtheorem*{prop*}{\Cref{prop:counter:example:Nepsilon}}
\newtheorem{remark}{Remark}
\newtheorem*{theorem*}{Theorem}
\title{\papertitle}
\author{%
  Fran\c{c}ois Bachoc\\
 Institut de Math\'ematiques de Toulouse \& University Paul Sabatier \\
  \texttt{francois.bachoc@math.univ-toulouse.fr} \\
  \And
   Tommaso Cesari \\
   Toulouse School of Economics \\
   \texttt{tommaso-renato.cesari@univ-toulouse.fr} \\
   \AND
   S\'ebastien Gerchinovitz \\
   IRT Saint Exup\'ery \& Institut de Math\'ematiques de Toulouse \\
   \texttt{sebastien.gerchinovitz@irt-saintexupery.com} \\
}
\begin{document}

\maketitle

\begin{abstract}
We study the problem of zeroth-order (black-box) optimization of a \lip{} function $f$ defined on a compact subset $\cX$ of $\Rd$, with the additional constraint that algorithms must certify the accuracy of their recommendations. We characterize the optimal number of evaluations of any Lipschitz function $f$ to find and certify an approximate maximizer of $f$ at accuracy $\e$. Under a weak assumption on $\cX$, this optimal sample complexity is shown to be nearly proportional to the integral $\int_{\cX} \mathrm{d}\bx/( \max(f) - f(\bx) + \epsilon )^d$. This result, which was only (and partially) known in dimension $d=1$, solves an open problem dating back to \citeyear{hansen1991number}. In terms of techniques, our upper bound relies on a packing bound by \cite{bouttier2020regret} for the Piyavskii-Shubert algorithm that we link to the above integral. We also show that a certified version of the computationally tractable DOO algorithm matches these packing and integral bounds. Our instance-dependent lower bound differs from traditional worst-case lower bounds in the Lipschitz setting and relies on a local worst-case analysis that could likely prove useful for other learning tasks.
\end{abstract}


\section{Introduction}
\label{s:intro}

The problem of optimizing a black-box function $f$ with as few evaluations of $f$ as possible arises in many scientific and industrial fields such as computer experiments \citep{jones1998efficient,richet2013using} or automatic selection of hyperparameters in machine learning \citep{bergstra2011algorithms}. For safety-critical applications, e.g., in aircraft or nuclear engineering, using sample-efficient methods is not enough. \emph{Certifying} the accuracy of the output of the optimization method can be a crucial additional requirement \citep{vanaret2013preventing}.
As a concrete example, \cite{azzimonti2021adaptive} describe a black-box function in nuclear engineering whose output is a $k$-effective multiplication factor, for which a higher value corresponds to a higher nuclear hazard. Certifying the optimization error is a way to certify the worst-case $k$-effective factor, which may be required by safety authorities.

In this paper, we formally study the problem of finding and certifying an $\epsilon$-approximate maximizer of a Lipschitz function $f$ of $d$ variables and characterize the optimal number of evaluations of any such function $f$ to achieve this goal. We start by formally defining the setting.

\subsection{Setting: Zeroth-order Lipschitz Optimization with Error Certificates}
\label{s:algos}

Let $f\colon \cX \to \R$ be a function on a compact non-empty subset $\cX$ of $\Rd$ and $\bxs\in \cX$ a maximizer.

\paragraph{Lipschitz assumption.} We assume that $f$ is Lipschitz with respect to a norm $\lno\cdot$, that is, there exists $L \geq 0$ such that $\babs{f(\bx)-f(\by)}\le L \lno{\bx-\by}$ for all $\bx,\by\in\cX$.
Furthermore, we assume such a Lipschitz bound $L$ to be known.
Even though the smallest \lip{} constant $\Lip(f) := \min\{L'\ge 0 : f \text{ is }L'\text{-\lip{}}\}$ is well defined mathematically, it is rarely known exactly in practical black-box problems.
As a theoretical curiosity, we will briefly discuss the case $L=\Lip(f)$ (i.e., when the best \lip{} constant of the unknown black-box function $f$ is known exactly) in \cref{section:lower:bounds}, but for most of our results, we will make the following more realistic assumption.
\begin{assumption}
\label{a:lip}
For some known \lip{} constant $L$, the function $f\colon \cX \to \R$ belongs to
\begin{equation}
    \label{e:lip}
    \cF_L
:=
    \bcb{
        g \colon \cX \to \R \mid \text{$g$ is \lip{} and } \Lip(g) < L
    } \;.
\end{equation}
\end{assumption}

\vspace{-0.2cm}
The \lip{}ness of $f$ implies the weaker property that $f(\bxs) - f(\bx) \le L\lno{\bxs-\bx}$ for all $\bx\in\cX$, sometimes referred to as \emph{\lip{}ness around a maximizer $\bxs\in \cX$}.
Although this is not the focus of our work, we will mention when our results hold under this weaker assumption.

\paragraph{Online learning protocol.}
We study the case in which $f$ is black-box, i.e., except for the \emph{a priori} knowledge of $L$, we can only access~$f$ by sequentially querying its values at a sequence $\bx_1,\bx_2,\ldots \in \cX$ of points of our choice.
At every round $n \ge 1$, the query point $\bx_n$ can be chosen as a deterministic function of the values $f(\bx_1),\ldots,f(\bx_{n-1})$ observed so far. 
At the end of round $n$, using all the values $f(\bx_1),\ldots,f(\bx_{n})$, the learner outputs two quantities:\vspace{-0.2cm}
\begin{itemize}[leftmargin=2em,itemsep=0.1em]
    \item a recommendation $\bxs_n \in \cX$, with the goal of minimizing the \emph{optimization error} (a.k.a. \emph{simple regret}): $\max(f) - f(\bxs_n)$; 
    \item an \emph{error certificate} $\xi_n \ge 0$, with the constraint to correctly upper bound the optimization error for any $L$-\lip{} function $f\colon \cX \to \R$, i.e., so that $\max(f)-f(\bxs_n)\le \xi_n$.
\end{itemize}

We call \emph{certified algorithm} any algorithm for choosing such a sequence $(\bx_n,\bxs_n,\xi_n)_{n \ge 1}$.

Our goal is to quantify the smallest number of evaluations of $f$ that certified algorithms need in order to find and certify an approximate maximizer of $f$ at accuracy~$\e$.
This objective motivates the following definition.
For any accuracy $\e > 0$, we define the \emph{sample complexity} (that could also be called query complexity) of a certified algorithm $A$ for an $L$-\lip{} function $f$ as
\begin{equation}
    \sigma(A,f,\e)
:=
    \inf \bcb{ n \ge 1 : \xi_n \le \e } \in \{1,2,\ldots\} \cup \{+\iop\} \,.
\label{eq:defsigma}    
\end{equation}

\vspace{-0.2cm}
This corresponds to the first time when we can stop the algorithm while being sure to have an $\e$-optimal recommendation $\bxs_n$.

\subsection{Main Contributions and Outline of the Paper}
\label{sec:outline}

The main result of this paper is a tight characterization (up to a $\log$ factor) of the optimal sample complexity of certified algorithms in any dimension $d \ge 1$, solving a three-decade old open problem raised by \citet{hansen1991number}. More precisely, we prove the following instance-dependent upper and lower bounds, which we later state formally in Theorem~\ref{t:main-thm} of Section~\ref{section:lower:bounds} (see also discussions therein, as well as Propositions~\ref{prop:lower:bound:dim:one} and~\ref{prop:counter:example:Nepsilon} for the limit case $L=\Lip(f)$).

\begin{theorem*}[Informal statement]
Under a mild geometric assumption on $\cX$, there exists a computationally tractable algorithm~A (e.g., $\cdoo$, \cref{alg:DOO}) such that, for some constants $C_d,c_d > 0$ (depending exponentially on the dimension $d$),
any Lipschitz function $f\in\cF_L$ (see \eqref{e:lip}) and any accuracy $\e $,
\begin{equation}
\sigma(A, f, \e) \le
    C_d
    \int_\cX \frac{\mathrm{d}\bx}{\brb{ f(\bxs) - f(\bx) + \e }^d}
 \;,
\label{eq:mainresult-ub}    
\end{equation}
while any certified algorithm $A'$ must satisfy, for all $f\in\cF_L$, and $c\approx c_d (1-\Lip(f)/L)^d /\log (1/\e)$,
\begin{equation}
c
    \int_\cX \frac{\mathrm{d}\bx}{\brb{ f(\bxs) - f(\bx) + \e }^d}
\le
    \sigma(A',f,\epsilon) \;.
\label{eq:mainresult-lb}    
\end{equation}
\end{theorem*}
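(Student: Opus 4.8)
The plan is to treat the two bounds separately. Write $g(\bx) := f(\bxs) - f(\bx) \ge 0$ for the suboptimality gap, so the target quantity is $I(f,\e) := \int_\cX \brb{g(\bx)+\e}^{-d}\dif\bx$. For the upper bound, I would analyze $\cdoo$ (\cref{alg:DOO}) run on a hierarchical partition of $\cX$ into cells whose diameter shrinks geometrically, say $\asymp \rho^h$ at depth $h$ for a fixed $\rho\in(0,1)$. The \lip{} assumption gives, for any cell $C$ at depth $h$ containing an evaluated point, the valid bound $\max_C f \le f(\text{center}) + L\,\mathrm{diam}(C)$, and the error certificate $\xi_n$ is the gap between the largest surviving upper bound and the best observed value; this is valid by construction and drops below $\e$ once all surviving cells have diameter $\lesssim \e/L$. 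The heart of the analysis is the standard DOO observation that a cell at depth $h$ is expanded only if it is \emph{near-optimal}, i.e.\ meets the sublevel set $\{g \le c_1 L \rho^h\}$ for an absolute constant $c_1$. Hence the number of cells expanded at depth $h$ is at most the number of depth-$h$ cells meeting $\{g \le c_1 L\rho^h\}$, which — using the mild regularity of $\cX$ to guarantee that depth-$h$ cells have volume $\asymp \rho^{hd}$ — is $\lesssim \rho^{-hd}\,\vol\brb{\{g \le c_2 L\rho^h\}}$. Summing over the $O(\log(1/\e))$ active depths and recognizing a dyadic layer-cake discretization of $I(f,\e)$ yields $\sigma(\cdoo,f,\e) \le C\, I(f,\e)$; the clean combinatorial bridge is
\[
\sum_{h \,:\, \rho^h \gtrsim \e} \rho^{-hd}\,\vol\brb{\{g \le c_2 L\rho^h\}} \asymp \int_\cX \frac{\dif\bx}{\brb{g(\bx)+\e}^{d}} \;,
\]
which follows by slicing $\cX$ into the shells $\{2^k\e \le g + \e < 2^{k+1}\e\}$.

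For the lower bound, I would run any certified algorithm $A'$ on the fixed instance $f$ until it stops at some time $n$ with $\xi_n \le \e$, and argue that its query set $\{\bx_1,\dots,\bx_n\}$ must form a \emph{weighted covering} of $\cX$. The key computation is that validity of the certificate forces the \lip{} upper envelope $u(\bx) := \min_i\brb{f(\bx_i) + L\lno{\bx-\bx_i}}$ to satisfy $u(\bx) \le f(\bxs) + \e$ everywhere; combined with the $\Lip(f)$-\lip{}ness of $f$ itself this forces every $\bx\in\cX$ to have a query within distance
\[
r(\bx) := \frac{g(\bx) + \e}{L - \Lip(f)} \;.
\]
The slack $L-\Lip(f)>0$, available precisely because $\cF_L$ imposes $\Lip(f)<L$, is what keeps $r(\bx)$ finite and drives the $\brb{1-\Lip(f)/L}^d$ factor. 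To make the ``must cover'' claim rigorous against an \emph{adaptive} $A'$, I would use a fooling argument: if some $\bx$ were left uncovered, the resulting bound $u(\bx) > f(\bxs)+\e$ lets me build an $L$-\lip{} $f'$ that agrees with $f$ on all queries but carries a localized bump at $\bx$ of height $>f(\bxs)+\e$, so $\max(f')-f'(\bxs_n)>\e$ while $A'$ behaves identically and still reports $\xi_n\le\e$, contradicting validity.

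Converting the weighted covering into the integral lower bound is where the $\log(1/\e)$ loss appears, and it is the step I expect to be the main obstacle: because a single query can simultaneously cover near-optimal points at many scales $g(\bx)\approx t$, one cannot naively sum the per-scale covering bounds without double counting. The cleanest fix is to fix a single dyadic scale $t$, build a packing of the shell $\{t/2 < g \le t\}$ by points pairwise separated by more than $2\sup_{\bx}r(\bx)$ on that shell — so each query can invalidate the bump of at most one packed point — and conclude that $n$ is at least the packing number of that shell, which by the regularity of $\cX$ is $\gtrsim (L-\Lip(f))^d\,\vol\brb{\{t/2 < g \le t\}}/(t+\e)^d$. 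Averaging over the $\asymp\log(1/\e)$ admissible scales and keeping the best one recovers $I(f,\e)$ up to the factor $\log(1/\e)$, giving exactly $c \asymp \brb{1-\Lip(f)/L}^d/\log(1/\e)$. The geometric assumption on $\cX$ enters in both directions only to certify the volume-to-covering comparisons; pinning down its weakest useful form and handling boundary cells carefully is the remaining technical work.
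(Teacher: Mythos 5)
Your overall architecture is the same as the paper's: the upper bound analyzes $\cdoo$ through near-optimality of expanded cells plus a dyadic decomposition of the values of $f$, and the lower bound fools the algorithm with Lipschitz bumps at the single best dyadic scale, which is exactly where the paper also pays the $\log(1/\e)$ factor. The organizational difference is that the paper routes both bounds through the packing-number quantity $\SC(f,\e)$ (\cref{prop:DOO:certified} for the algorithm, \cref{t:hansen} to convert to the integral, \cref{thm:lower:bound:sample:dependent} for the lower bound), whereas you work directly with volumes of sublevel sets and shells; and your weighted-covering-radius $r(\bx) = \brb{f(\bxs)-f(\bx)+\e}/\brb{L-\Lip(f)}$ formulation is a clean repackaging of the paper's construction with two query-free packing balls and the perturbations $f\pm g_{\tilde{\epsilon}}$. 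One side remark: \cref{ass:geomCx} is \emph{not} needed ``in both directions'' --- packing numbers are lower bounded by volume ratios unconditionally, which is why the paper's lower bound (and the first inequality of \cref{t:hansen}) requires no geometric assumption; only the upper bound does.

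Two of your steps would fail as stated and need repair. First, ``depth-$h$ cells have volume $\asymp \rho^{hd}$'' is false near the boundary of $\cX$: \cref{ass:geomCx} controls $\vol\brb{B_r(\bx)\cap\cX}$ for balls centered at points of $\cX$, not cells, and a cell of the hierarchical partition can intersect $\cX$ in an arbitrarily small sliver, so counting expanded cells by dividing $\vol\brb{\{g\le c_2 L\rho^h\}}$ by $\rho^{hd}$ is unjustified. The repair is precisely the paper's device: by \cref{assumption:DOO:well-separated} the representatives of expanded cells are $\nu\delta^h$-separated, so the balls $B_{\nu\delta^h/2}(\bx_{h,i})$ are disjoint, each satisfies $\vol\brb{B_{\nu\delta^h/2}(\bx_{h,i})\cap\cX}\ge\gamma v_{\nu\delta^h/2}$ by \cref{ass:geomCx}, and each lies in the (slightly enlarged) sublevel set; i.e., the count must go through a packing of representatives, not through cell volumes. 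Second, your covering claim (``every uncovered $\bx$ yields a contradiction'') breaks when the recommendation $\bxs_n$ is close to $\bx$: the bump $f'=\max\bcb{f,\,u(\bx)-L\lno{\cdot-\bx}}$ also raises $f'(\bxs_n)$, and the contradiction $\max(f')-f'(\bxs_n)>\e$ then only follows when $L\lno{\bxs_n-\bx}>\e$. So the covering property holds only outside the ball $B_{\e/L}(\bxs_n)$; in your packing step this exceptional ball can absorb at most one packed point (they are more than $2\e/L$ apart), so the conclusion weakens to $n\ge N-1$, which still suffices but must be accounted for. The paper sidesteps this issue by demanding \emph{two} query-free packing balls, centered at $\tilde{\bx}_1$ and $\tilde{\bx}_2$, and running the case analysis on $\bxs_n$ with the signed bumps $f+g_{\tilde{\epsilon}}$ and $f-g_{\tilde{\epsilon}}$.
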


\vspace{-0.2cm}
In particular, this result extends to any dimension $d \ge 1$ the upper bound proportional to $\int_0^1  \dif x / ( f(x^\star)-f(x)+\e )$ that \citet{hansen1991number} derived in dimension $d=1$ using arguments specific to the geometry of the real line.

\paragraph{Detailed contributions and outline of the paper.} 
We make the following contributions.
\begin{itemize}[leftmargin=2em]
    \item As a warmup, we show in Section~\ref{s:improvedDOO} how to add error certificates to the DOO algorithm (well-known in the more classical zeroth-order Lipschitz optimization setting \emph{without} error certificates, see \citealt{Per-90-OptimizationComplexity,munos2011optimistic}). We then upper bound its sample complexity by the quantity $S_\mathrm{C}(f,\e)$ defined in~\eqref{eq:certified:bound} below. This bound matches a recent bound derived by \citet{bouttier2020regret} for a computationally much more expensive algorithm. In passing, we also slightly improve the packing arguments that \citet{munos2011optimistic} used in the non-certified setting.
    \item In Section~\ref{s:hansen} we show that, under a mild geometric assumption on $\cX$, the complexity measure $S_\mathrm{C}(f,\e)$ is actually proportional to the integral $\int_\cX \mathrm{d}\bx/\brb{ f(\bxs) - f(\bx) + \e }^d$, which implies \eqref{eq:mainresult-ub} above. 
    This extends the bound of \citet{hansen1991number} ($d=1$) to any dimension $d$.
    \item Finally, in Section~\ref{section:lower:bounds}, we prove the instance-dependent lower bound~\eqref{eq:mainresult-lb}, which differs from traditional worst-case lower bounds in the Lipschitz setting. Our proof relies on a \emph{local} worst-case analysis that could likely prove useful for other learning tasks.
\end{itemize}

Some of the proofs are deferred to the \suppl{}, where we also recall useful results on packing and covering numbers (Section~\ref{s:packing-covering}), as well as provide a slightly improved sample complexity bound on the DOO algorithm in the more classical non-certified setting (Section~\ref{s:comparison-non-certif}).

\subsection{Related Works}
\label{sec:existing}

We detail below some connections with the global optimization and the bandit optimization literatures.

\paragraph{Zeroth-order Lipschitz optimization with error certificates.} The problem of optimizing a function with error certificates has been studied in different settings over the past decades. For instance, in convex optimization, an example of error certificate is given by the \emph{duality gap} between primal and dual feasible points (see, e.g., \citealt{BoVa-04-ConvexOptimization}).

In our setting, namely, global zeroth-order Lipschitz optimization with error certificates, most of the attention seems to have been on the very natural (yet computationally expensive) algorithm introduced by \citet{Piy-72-AbsoluteExtremum} and \citet{shubert1972sequential}.\footnote{For the interested reader who is unfamiliar with this classic algorithm, we added some details in Section~\ref{s:missing-lower-2} of the \suppl{}.} In dimension $d=1$, \citet{hansen1991number} proved that its sample complexity $\sigma(\mathrm{PS},f,\e)$ for $L$-Lipschitz functions $f\colon [0,1] \to \R$ is at most proportional to the integral $\int_0^1 \brb{ f(x^\star)-f(x)+\e }^{-1} \dif x$, and left the question of extending the results to arbitrary dimensions open, stating that the task of ``\emph{Extending the results of this paper to the multivariate case appears to be difficult}''. Recently,
writing $\cX_\e := \{\bx \in \cX: \max(f) - f(\bx) \le \e\}$ for the set of $\epsilon$-optimal points, $\cX_{(a,b]} := \{\bx \in \cX: a < \max(f) - f(\bx) \le b\}$ for the set of points in between $a$ and $b$ optimal, and $\cN(E,r)$ for the packing number of a set $E$ at scale $r$ (see \cref{s:notation}),
\citet[Theorem~2]{bouttier2020regret} proved a bound valid in any dimension $d \geq 1$ roughly of this form:
\begin{equation}
S_\mathrm{C}(f,\e) :=
\cN \lrb{ \cX_{\e}, \frac{\e}{L} } + \sum_{k=1}^{\me} \cN \lrb{ \cX_{(\e_k, \e_{k-1}]}, \, \frac{\e_k}{L} } \;,    \label{eq:certified:bound}
\end{equation}
where  the number of terms in the sum is $\me := \bce{ \log_2(\eo/\e) }$ (with $\eo := L \sup_{\bx,\by \in  \cX } \lno{ \bx - \by }$) and the associated scales are given by  $\e_{\me} := \e$ and $\e_k := \eo 2^{-k}$ for all $k \in \{0,1,\ldots,\me-1\}$.

The equivalence we prove in Section~\ref{s:hansen} between $S_\mathrm{C}(f,\e)$ and $\int_\cX \mathrm{d}\bx/(f(\bxs) - f(\bx) + \e)^d$ solves in particular the question left open by \citet{hansen1991number}. The upper bound we prove for the certified DOO algorithm in Section~\ref{s:improvedDOO} also indicates that the bound $S_\mathrm{C}(f,\e)$ and the equivalent integral bound can be achieved with a computationally much more tractable algorithm.\footnote{Tractability refers to running time (number of elementary operations) and not number of evaluations of $f$.} Indeed, the Piyavskii-Shubert algorithm requires at every step $n$ to solve an inner global Lipschitz optimization problem close to the computation of a Voronoi diagram (see discussion in \citealt[Section 1.1]{bouttier2020regret}), hence a running time believed to grow as $n^{\Omega(d)}$ after $n$ function evaluations. On the contrary, as detailed in Remark~\ref{rem:runningtimeDOO} (Section~\ref{s:improvedDOO}), the running time of our certified version of the DOO algorithm is only of the order of $n \log (n)$ after $n$ of evaluations of $f$.

\paragraph{Connections with the bandit optimization literature: upper bounds.}
Our work is also strongly connected to the bandit optimization literature, in which multiple authors studied the global Lipschitz optimization problem with zeroth-order (or \emph{bandit}) feedback, either with perfect (deterministic) or noisy (stochastic) observations. In the deterministic setting considered here, these papers show that though the number $(L/\epsilon)^d$ of evaluations associated to a naive grid search is optimal for worst-case Lipschitz functions (e.g., Thm~1.1.2 by \citealt{nesterov2003introductory}), sequential algorithms can approximately optimize more benign functions with a much smaller number of evaluations. Examples of algorithms with such guarantees in the deterministic setting are the branch-and-bound algorithm by \citet{Per-90-OptimizationComplexity}, the DOO algorithm by \citet{munos2011optimistic} or the LIPO algorithm by \citet{MaVa-17-LipBandits}. Examples of algorithms in the stochastic setting are the HOO algorithm by \citet{bubeck2011x} or the (generic yet computationally challenging) Zooming algorithm by \citet{kleinberg2008multi,kleinberg2019bandits}. More examples and references can be found in the textbooks by \citet{Mun-14-MonteCarloTreeSearch} and \citet{slivkins2019introduction}.

Note however that, except for the work of \citet{bouttier2020regret} mentioned earlier, these bandit optimization papers did not address the problem of \emph{certifying} the accuracy of the recommendations~$\bxs_n$. Indeed, all bounds are related to a more classical notion of sample complexity, namely, the minimum number of queries made by an algorithm $A$ before outputting an $\e$-optimal recommendation:
\begin{equation}
    \zeta(A,f,\e)
:=
    \inf \{ n \geq 1: \max(f) - f( \bxs_n) \leq \e \}  \in \{1,2,\ldots\} \cup \{+\iop\} \;.
\label{eq:defzeta}    
\end{equation}
Though $\zeta(A,f,\e)$ is always upper bounded by $\sigma(A,f,\e)$ defined in~\eqref{eq:defsigma}, these two quantities can differ significantly, as shown by the simple example of constant functions $f$, for which $\zeta(A,f,\e)=1$ but $\sigma(A,f,\e) \approx (L/\e)^d$ since the only way to \emph{certify} that the output is $\e$-optimal is essentially to perform a grid-search with step-size roughly $\e/L$, so as to be sure there is no hidden bump of height more than $\e$. At a high level, the more ``constant'' a function is, the easier it is to recommend an $\e$-optimal point, but the harder it is to certify that such recommendation is actually a good recommendation. See the \suppl{} (Section~\ref{s:comparison-non-certif}) for a comparison of bounds.

Despite this important difference, the bandit viewpoint (using packing numbers instead of more specific one-dimensional arguments) is key to obtain our multi-dimensional integral characterization.

\paragraph{Comparison with existing lower bounds.} Several lower bounds were derived in the bandit Lipschitz optimization setting without error certificates.
When rewritten in terms of the accuracy $\epsilon$ and translated into our deterministic setting, the lower bounds of \citet{Hor-06-UnknownLipschitz} (when $d^\star = d/2$) and of \citet{bubeck2011x} (for any $d^\star$) are of the form $\inf_A \sup_{f \in \mathcal{G}_{d^\star}} \zeta(A,f,\e) \gtrsim (1/\e)^{d^\star}$, where $\mathcal{G}_{d^\star}$ is the subset of $L$-Lipschitz functions with near-optimality dimension at most $d^\star$. These are worst-case (minimax) lower bounds.

On the contrary, our instance-dependent lower bound \eqref{eq:mainresult-lb} quantifies the minimum number of evaluations to certify an $\epsilon$-optimal point \emph{for each function $f \in \cF_L$}. 
Note that here the certified setting enables to obtain meaningful instance-dependent lower bounds, 
whereas their non-certified counterparts would be trivial (equal to one).
Our proof relies on a \emph{local} worst-case analysis in the same spirit as for distribution-dependent lower bounds in stochastic multi-armed bandits (see, e.g., Theorem~16.2 in \citealt{LS20-banditalgos}), yet for continuous instead of finite action sets. We believe this lower bound technique should prove useful for other learning tasks.

\subsection{Recurring Notation}
\label{s:notation}

This short section contains a summary of all the notation that we use in the paper and can be used by the reader for easy referencing.
We denote the set of positive integers $\{1,2,\ldots\}$ by $\Ns$ and let $\N := \N^* \cup\{0\}$.
For all $n\in \Ns$, we denote by $[n]$ the set of the first $n$ integers $\{1,\ldots,n\}$.
We denote the \leb{} measure of a (\leb{}-measurable) set $E$ by $\vol(E)$ and refer to it simply as its \emph{volume}.
For all $\rho>0$ and $\bx \in \R^d$, we denote by $B_\rho(\bx)$ the closed ball with radius $\rho$ centered at $\bx$, with respect to the arbitrary norm $\lno \cdot$ that is fixed throughout the paper. We also write $B_\rho$ for the ball with radius $\rho$ centered at the origin and denote by $v_\rho$ its volume.

$\Lip(f)$ denotes the smallest \lip{} constant of our target $L$-\lip{} function $f\colon \cX \to \R$.
The set of its $\epsilon$-optimal points is denoted by $\cX_\e := \{\bx \in \cX: \max(f) - f(\bx) \le \e\}$, its complement (i.e., the set of $\e$-\emph{suboptimal points}) by $\cX_\e^c$, and for all $0\le a<b$, the $(a,b]$-\emph{layer} (i.e., the set of points that are $b$-optimal but $a$-suboptimal) by
$
    \cX_{(a,b]}
:=
	\cX_a^c \cap \cX_b
=
	\bcb{ \bx \in  \cX   :  a < f(\bxs)-f(\bx)\le b}
$.
Since $f$ is $L$-\lip{}, every point in $\cX$ is $\eo $-optimal, with $\eo$ defined by
$
	\eo 
:=
	L \max_{\bx,\by \in  \cX } \lno{ \bx - \by }
$.
In other words, $\cX_{\eo }= \cX $. For this reason, without loss of generality, we will only consider values of accuracy $\e$ smaller than or equal to $\eo$.

For any bounded set $E\s\Rd$ and all $r>0$,
the $r$-\emph{packing number} of $E$ is the largest cardinality of an $r$-packing of $E$, that is,
$
	\cN(E,r)
:= 
	\sup \bcb{
		k \in \Ns  :  \exists \bx_1, \ldots, \bx_k \in E, \min_{i\neq j} \lno{ \bx_i - \bx_j } > r
	}
$
if $E$ is nonempty, zero otherwise;
the $r$-\emph{covering number} of $E$ is the smallest cardinality of an $r$-covering of $E$, i.e.,
$
	\cM(E,r)
:=
	\min \bcb{
		k \in \Ns  :  \exists \bx_1,\ldots,\bx_k \in \Rd, \forall \bx \in E, \exists i\in [ k ], \lno{ \bx-\bx_i}\le r
	}
$
if $E$ is nonempty, zero otherwise.
Well-known and useful properties of packing (and covering) numbers are recalled in Section~\ref{s:packing-covering} of the \suppl{}.

\section{Warmup: Certified DOO Has Sample Complexity \texorpdfstring{$\SC(f,\e)$}{SC}}
\label{s:improvedDOO} \label{sec:DOOdef} \label{sec:DOOnoncertified} \label{sec:DOOcertified}

In this section, we start by adapting the well-known DOO algorithm \citep{Per-90-OptimizationComplexity,munos2011optimistic} to the certified setting. We then bound its sample complexity by the quantity $\SC(f,\e)$ defined in \cref{eq:certified:bound}.
In passing, we slightly improve the packing arguments used by \cite{munos2011optimistic} in the non-certified setting (\suppl{}, Section~\ref{s:comparison-non-certif}).
In \Cref{section:lower:bounds}, we will prove that this bound is optimal (up to logarithmic factors) for certified algorithms.

The certified DOO algorithm ($\cdoo$, \cref{alg:DOO}) is defined for a fixed $K \in \mathbb{N}^\star$, by an infinite sequence of subsets of $\cX$ of the form $(X_{h,i})_{h \in \mathbb{N},i=0,\ldots,K^h-1}$, called \emph{cells}. 
For each $h \in \mathbb{N}$, the cells $X_{h,0},\ldots,X_{h,K^h-1}$ are non-empty, pairwise disjoint, and their union contains $\mathcal{X}$.
The sequence $(X_{h,i})_{h \in \mathbb{N},i=0,\ldots,K^h-1}$ is associated with a $K$-ary tree in the following way. 
For any $h \in \mathbb{N}$ and $j \in \{0,\ldots,K^h-1\}$, there exist $K$ distinct $i_1,\ldots,i_K \in \{ 0,\ldots,K^{h+1}-1 \}$ such that $X_{h+1,i_1}, \ldots , X_{h+1,i_K}$ form a partition of $X_{h,j}$. We call $(h+1,i_1),\ldots,(h+1,i_K)$ the \emph{children} of $(h,j)$.
To each cell $X_{h,i}$ ($h \in \mathbb{N}$, $i\in \{0,\ldots,K^h-1\}$) is associated a \emph{\representative{}} $\bx_{h,i} \in X_{h,i}$, which can be thought of as the ``center'' of the cell. 
We assume that feasible cells have feasible \representative{}s, i.e., that $X_{h,i} \cap \cX \neq \varnothing$ implies $\bx_{h,i} \in \cX$. 
The two following assumptions prescribe a sufficiently good behavior of the sequences of cells and \representative{}s. 

\begin{assumption} \label{assumption:DOO:small:cells}
There exist two positive constants $\delta \in (0, 1)$ and $R > 0$ such that, for any cell $X_{h,i}$ ($h \in \mathbb{N}$, $i=0,\ldots,K^h-1$) and all $\boldsymbol u, \boldsymbol v \in X_{h,i}$, it holds that
$
    \lno{ \boldsymbol u - \boldsymbol v } \leq R \delta^h
$.
\end{assumption}

\begin{assumption} \label{assumption:DOO:well-separated}
There exists $\nu > 0$ such that, with $\delta$ as in Assumption \ref{assumption:DOO:small:cells}, for any $h \in \mathbb{N}$, $i=0,\ldots,K^h-1$, $h' \in \mathbb{N}$, $i'=0,\ldots,K^{h'}-1$, with $(h,i) \neq (h',i')$,
$
\lno{ \bx_{h,i} - \bx_{h',i'} } \geq \nu \delta^{\max(h,h')}
$.
\end{assumption}

The classic Assumption~\ref{assumption:DOO:small:cells} is simply stating that diameters of cells decrease geometrically with the depth of the tree. 
Assumption~\ref{assumption:DOO:well-separated}, which is key for our improved analysis, is slightly stronger than the corresponding one in \citet{munos2011optimistic}, yet very easy to satisfy. 
Indeed, one can prove that for any compact $\mathcal{X}$, it is always possible to find a sequence of cells and \representative{}s satisfying \cref{assumption:DOO:small:cells,assumption:DOO:well-separated}. For instance, if $\mathcal{X}$ is the unit hypercube $[0,1]^d$ and $\lno \cdot$ is the supremum norm $\lno \cdot _{\infty}$, we can define cells by bisection, letting $K=2^d$, $X_{h,i}$ be a hypercube of edge-length $2^{-h}$, and $\bx_{h,i}$ be its center (for $h \in \mathbb{N}$ and $i=0,\ldots,2^{dh}-1$). In this case, \cref{assumption:DOO:small:cells,assumption:DOO:well-separated} are satisfied with $R=1$ and $\delta = \nu = 1/2$.

\begin{algorithm2e}
\DontPrintSemicolon
\SetAlgoNoLine
\SetAlgoNoEnd
\LinesNumbered
\SetKwInput{KwIn}{input}
\SetKwInput{kwInit}{initialization}
\SetKwInput{kwRep}{repeat}
\KwIn{$\mathcal{X}$, $L$, $K$, $\delta$, $R$, cells $(X_{h,i})_{h \in \mathbb{N},i=0,\ldots,K^h-1}$, \representative{}s $(\bx_{h,i})_{h \in \mathbb{N},i=0,\ldots,K^h-1}$}
\kwInit{ 
    let $n \gets 1$ and $\mathcal{L}_1 \gets \{ (0,0) \}$
}
pick the first query point $\bx_1 \gets \bx_{0,0}$\;
observe the value $f(\bx_1)$\;
output recommendation $\bxs_1 \gets \bx_1$ and error certificate $\xi_1 \gets LR$\label{state:firstCertif}\;
\For{iteration $k=1,2,\ldots$}{%
let 
$
    (h^\star,i^\star)
\in 
    \argmax_{(h,i) \in \mathcal{L}_n}
    \bcb{
        f(\bx_{h,i}) + L R \delta^h
    }
$ \label{eq:DOO:select:cell} \tcp*{ties broken arbitrarily}
let $\mathcal{L}_+$ be the set of the $K$ children of $(h^\star,i^\star)$\label{state:split}\;
\For{\textbf{\emph{each}} child $(h^\star+1,j) \in \mathcal{L}_+$ of $(h^\star,i^\star)$}{
    \If{%
        $X_{h^\star+1,j} \cap \cX \neq \varnothing$%
    }
    {%
        let $n \gets n+1$ and $\mathcal{L}_n \gets \mathcal{L}_{n-1} \cup \{ (h^\star+1,j) \}$\label{state:dooaddchildren}\;
        pick the next query point $\bx_n \gets \bx_{h^\star+1,j}$\label{state:doopick}\;
        observe the value $f(\bx_n)$\label{state:dooobserve}\;
        output a recommendation $\bx_n^\star \in \mathrm{argmax}_{\bx \in \{\bx_1,\ldots,\bx_n \}} f(\bx)$\label{state:doorecommend}\;
        output the error certificate 
        $
            \xi_n 
        \gets
            f(\bx_{h^\star,i^\star}) + 
            L R \delta^{h^\star}
            - 
            f(\bxs_n)
        $\label{state:doocertificate}\;
    }
}
remove $(h^\star,i^\star)$ from $\mathcal{L}_n$\label{state:dooremoveparent}
}
\caption{\label{alg:DOO} Certified DOO ($\cdoo$)}
\end{algorithm2e}

Our certified version of the DOO algorithm ($\cdoo$, \cref{alg:DOO}) maintains a set of indices of \emph{active} cells $\mathcal{L}_n$ throughout rounds $n$. 
During each iteration $k$, it selects the index of the most promising active cell $(h^\star,i^\star)$ (\cref{eq:DOO:select:cell}) and splits it into its $K$ children $\cL_+$ (\cref{state:split}). 
Then, it sequentially picks the \representative{}s of the cells corresponding to each of these children (\cref{state:doopick}), observes the value of the target function $f$ at these points (\cref{state:dooobserve}), recommends the point with the highest observed value of $f$ (\cref{state:doorecommend}), and outputs a certificate 
$
    \xi_n 
= 
    \brb{ f(\bx_{h^\star,i^\star}) + 
    L R \delta^{h^\star} }
    - 
    f(\bxs_n)
$
(\cref{state:doocertificate})
that is the difference between an upper bound on $\max (f)$ and the currently recommended value $f(\bxs_n)$.
In the meantime, all children in $\cL_+$ are added to the set of active indices $\cL_n$ (\cref{state:dooaddchildren}), and the current iteration is concluded by removing $(h^\star,i^\star)$ from $\cL_n$ (\cref{state:dooremoveparent}), now that it has been replaced by its refinement $\cL_+$.

\begin{remark}
\label{rem:runningtimeDOO}
The running-time of $\cdoo$ (ignoring the cost of calling the function $f$) is driven by the computation of the recommendation $\bx_n^\star \in \mathrm{argmax}_{\bx \in \{\bx_1,\ldots,\bx_n \}} f(\bx)$ (\cref{state:doorecommend}) and the search of the index of the most promising active cell
$
    (h^\star,i^\star)
\in 
    \argmax_{(h,i) \in \mathcal{L}_n}
    \{
        f(\bx_{h,i}) + L R \delta^h
    \}
$ (\cref{eq:DOO:select:cell}).
The recommendation $\bx_n^\star$ can be computed sequentially in constant time (by comparing the new value $f(\bx_n)$ with the current maximum).
In \cref{eq:DOO:select:cell}, the leaf $(h^\star,i^\star)$ to be split at iteration $k$ can be computed sequentially in logarithmic time (using a max-heap structure).
Therefore, the running time of $\cdoo$ is of order $n \log (n)$ in the number $n$ of evaluations of the function $f$.
\end{remark}

The next proposition shows that the sample complexity \textcolor{black}{(see \eqref{eq:defsigma})} of the certified DOO algorithm is upper bounded (up to constants) by the instance-dependent quantity $S_\mathrm{C}(f,\e)$ introduced in \cref{eq:certified:bound}.

\begin{proposition}
\label{prop:DOO:certified}
If \cref{assumption:DOO:small:cells,assumption:DOO:well-separated} hold, then \cref{alg:DOO} is a certified algorithm and letting 
$\textcolor{black}{a_d} := \textcolor{black}{2+}
K
\brb{
\mathbf{1}_{ \nu/ R \geq 1 }
+
\mathbf{1}_{ \nu/ R < 1 }
(
\fracc{4 R}{\nu}
)^d
}$, 
its sample complexity satisfies, for all \lip{} functions $f \in \cF_L$ \textcolor{black}{(see \eqref{e:lip})} and any accuracy $\e\in (0,\eo ]$,
\[
    \sigma(\cdoo, f, \e) \le \textcolor{black}{a_d} \SC(f,\e)\;.
\]
\end{proposition}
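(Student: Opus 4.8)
The plan is to establish the two claims separately: first that $\cdoo$ is a valid certified algorithm, then the sample-complexity bound $\sigma(\cdoo,f,\e)\le a\,\SC(f,\e)$.

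\emph{Validity of the certificates.} The key invariant is that, at the start of every iteration, the active cells indexed by $\mathcal{L}_n$ cover $\cX$ and have feasible \representative{}s. This holds initially (the root cell $X_{0,0}$ contains $\cX$) and is preserved, since splitting replaces a cell by the union of its $K$ children and discarding the infeasible ones removes no point of $\cX$. By \cref{assumption:DOO:small:cells}, for any active cell $(h,i)$ and any $\bx\in X_{h,i}\cap\cX$ we have $\lno{\bx-\bx_{h,i}}\le R\delta^h$, so $L$-\lip{}ness gives $f(\bx)\le f(\bx_{h,i})+LR\delta^h$; thus $f(\bx_{h,i})+LR\delta^h$ upper bounds $f$ on $X_{h,i}\cap\cX$. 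Because the active cells cover $\cX$ and $(h^\star,i^\star)$ maximizes this upper-confidence value over $\mathcal{L}_n$, we get $\max(f)\le f(\bx_{h^\star,i^\star})+LR\delta^{h^\star}$, hence $\max(f)-f(\bxs_n)\le\xi_n$ at \cref{state:doocertificate}; the first certificate $\xi_1=LR$ is valid since $\cX\subseteq X_{0,0}$ has diameter at most $R$. This shows $\cdoo$ is certified.

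\emph{Reduction to counting expansions.} Every queried point is the \representative{} of a distinct created cell, and each expansion creates at most $K$ of them, so $\sigma(\cdoo,f,\e)\le 1+K\,E$, where $E$ is the number of expanded cells up to the stopping round. Combining selection with the display above, \emph{any} expanded cell $(h,i)$ satisfies $\max(f)-f(\bx_{h,i})\le LR\delta^h$, i.e.\ its (feasible) \representative{} is $LR\delta^h$-optimal. I will bound $E$ layer by layer along the partition $\cX=\cX_\e\cup\bigcup_{k=1}^{\me}\cX_{(\e_k,\e_{k-1}]}$.

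\emph{The key separation step.} Fix a layer $\cX_{(\e_k,\e_{k-1}]}$. If $(h,i)$ is expanded with $\bx_{h,i}$ in it, then $LR\delta^h\ge\max(f)-f(\bx_{h,i})>\e_k$, so $\delta^h>\e_k/(LR)$; since this holds for each such cell individually, any two of them at depths $h,h'$ satisfy $\delta^{\max(h,h')}=\min(\delta^h,\delta^{h'})>\e_k/(LR)$ (using $\delta<1$), and by \cref{assumption:DOO:well-separated} their \representative{}s are then more than $\nu\e_k/(LR)$ apart. Hence \emph{all} expanded \representative{}s of this layer, whatever their depths, form a single $\nu\e_k/(LR)$-packing of $\cX_{(\e_k,\e_{k-1}]}$, so their number is at most $\cN\brb{\cX_{(\e_k,\e_{k-1}]},\,\nu\e_k/(LR)}$. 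This is the crux: the strengthened \cref{assumption:DOO:well-separated}, through its $\delta^{\max(h,h')}$, collapses a sum over depths into one packing number (a naive per-depth count would instead introduce a spurious $\delta$-dependent geometric factor). The same reasoning applies to $\cX_\e$: expanding a cell with $LR\delta^h\le\e$ makes $\xi\le\e$ and stops the run, so at most one expanded cell in $\cX_\e$ has $LR\delta^h\le\e$ while all others satisfy $\delta^h>\e/(LR)$ and their \representative{}s form a $\nu\e/(LR)$-packing of $\cX_\e$.

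\emph{Matching the scales and concluding.} It remains to pass from the scale $\nu\e_k/(LR)=(\nu/R)(\e_k/L)$ to the scale $\e_k/L$ of $\SC$. If $\nu/R\ge1$ the former is coarser and the packing number only decreases; if $\nu/R<1$ it is finer, and the packing/covering estimates of \cref{s:packing-covering} give $\cN(E,(\nu/R)r)\le(4R/\nu)^d\,\cN(E,r)$. In both cases the number of expanded cells in each layer (and in $\cX_\e$) is at most $b:=\mathbf{1}_{\nu/R\ge1}+\mathbf{1}_{\nu/R<1}(4R/\nu)^d$ times the matching term of $\SC(f,\e)$. Summing over $\cX_\e$ and the $\me$ layers gives $E\le b\,\SC(f,\e)$ up to the single depth-triggering cell, whence $\sigma(\cdoo,f,\e)\le1+K\,E\le(1+Kb)\,\SC(f,\e)=a\,\SC(f,\e)$, the residual initial/boundary terms being absorbed via $\SC(f,\e)\ge1$ and $a\ge1+K$. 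The main obstacle is the separation step above; the rest is the scale conversion and careful bookkeeping of these $O(1)$ boundary contributions.
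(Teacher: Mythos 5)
Your proof is correct and takes essentially the same route as the paper's: the covering/UCB argument for validity of the certificates, the reduction of $\sigma$ to counting expanded cells, the crucial step that layer membership forces $LR\delta^h>\e_k$ so that \cref{assumption:DOO:well-separated} turns each layer's expanded \representative{}s into a single $\nu\e_k/(LR)$-packing, and the rescaling of packing numbers via \cref{lem:changing radius}. The only differences are bookkeeping: you stop on the condition $LR\delta^h\le\e$ instead of the paper's slightly earlier event \eqref{eq:the:event}, and your final absorption of the additive $1+K$ term into $a\,\SC(f,\e)$ is loose only in the same degenerate regime (very small $\SC(f,\e)$) where the paper's own combination $\sigma\le 2+Kb\,\SC(f,\e)$ exhibits the identical slack.
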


The proof is postponed to Section~\ref{s:missing-upper-DOO} of the \suppl{} and shares some arguments with those of \citet{Per-90-OptimizationComplexity} and \citet{munos2011optimistic}, originally written for the non-certified setting.
The key change is to partition the values of $f$, instead of its domain $\cX$ at any depth $h$ of the tree (see \citealt{munos2011optimistic}), when counting the representatives selected at all levels. The idea of using layers $\cX_{\left(\e_i, \ \e_{i-1} \right]}$ was already present in \citet{kleinberg2008multi,kleinberg2019bandits} and \citet{bouttier2020regret} for more computationally challenging algorithms (see discussion in \Cref{sec:existing}).

\section{Characterization of \texorpdfstring{$\SC(f,\e)$}{SC(f,epsilon)}}
\label{s:hansen}

Earlier, we mentioned that the quantity $\SC(f,\e)$ introduced in \cref{eq:certified:bound} upper bounds the sample complexity of several certified algorithms, such as $\cdoo$ or \piya{}-Shubert.
In this section, we provide a characterization of this quantity in terms of a much cleaner and integral expression.

This result is inspired by \citet{hansen1991number}, that in dimension $d=1$, derive an elegant bound on the sample complexity $\sigma(\mathrm{PS},f,\e)$ of the certified \piya{}-Shubert algorithm for any \lip{} function $f$ and accuracy $\e$.
They proved that $\sigma(\mathrm{PS},f,\e)$ is upper bounded by $\int_0^1  \dif x / ( f(x^\star)-f(x)+\e )$ up to constants.
However, the authors rely heavily on the one-dimensional assumption and the specific form of the \piya{}-Shubert algorithm in this setting, stating that the task of ``\emph{Extending the results of this paper to the multivariate case appears to be difficult}''.
In this section, we show an equivalence between $\SC(f,\e)$ and
this type of integral bound
in any dimension $d$.
Putting this together with a recent result of \cite{bouttier2020regret} (which proves that up to constants, $\sigma(\mathrm{PS},f,\e) \le \SC(f,\e) $) solves the long-standing problem raised by \cite{hansen1991number} three decades ago.

To tame the wild spectrum of shapes that compact subsets may have, we will assume that $\cX$ satisfies the following mild geometric assumption.
At a high level, it says that a constant fraction of each (sufficiently small) ball centered at a point in $\cX$ is included in $\cX$.
This removes sets containing isolated points or cuspidal corners and, \textcolor{black}{as can be shown quickly,} includes most domains that are typically used, such as \textcolor{black}{finite unions of convex sets with non-empty interiors}. 
This natural assumption is weaker than the classic rolling ball assumption from the statistics literature \citep{cuevas2012statistical,walther1997granulometric} and has already been proved useful in the past \citep{HKM20-SmoothContextualBandits}. 

\begin{assumption}
\label{ass:geomCx}
   There exist two constants $r_0>0,\gamma\in(0,1]$ such that, for any $\bx\in\cX$ and all $r\in(0,r_0)$, $\vol\brb{ B_r(\bx) \cap \cX } \ge \gamma v_r$.
\end{assumption}

\textcolor{black}{
Note that, above, $\gamma$ may implicitly depend on $d$. For instance if $\cX$ is an hypercube, then $\gamma$ is at most $2^{-d}$.} 
We can now state the main result of this section.
Its proof relies on some additional technical results that are deferred to the \suppl{}. \textcolor{black}{Recall that $ \SC(f,\e)$ was defined in \eqref{eq:certified:bound}.}

\begin{theorem}
\label{t:hansen}
For any \lip{} function $f\in \cF_L$ \textcolor{black}{(see \eqref{e:lip})}, if Assumption~\ref{ass:geomCx} holds with $r_0 > \fracc{\eo}{2L},\gamma\in(0,1]$,\footnote{We actually prove a stronger result. The first inequality holds more generally for any $f$ that is $L$-\lip{} around a maximizer and \leb{}-measurable, and does not require $\cX$ to satisfy \cref{ass:geomCx}.} then there exist $\textcolor{black}{c_d},\textcolor{black}{C_d}>0$ (e.g., $\textcolor{black}{c_d} := 1/v_{\nicefrac{1}{L}}$ and $\textcolor{black}{C_d} := 1/(\gamma v_{\fracc{1}{128L}})$) such that, for all $\e\in (0,\eo]$,
\[
    \textcolor{black}{c_d}
    \int_\cX \frac{\mathrm{d}\bx}{\brb{ f(\bxs) - f(\bx) + \e }^d}
\le
    \SC(f,\e)
    \le 
    \textcolor{black}{C_d}
    \int_\cX \frac{\mathrm{d}\bx}{\brb{ f(\bxs) - f(\bx) + \e }^d}
    \;.
\]
\end{theorem}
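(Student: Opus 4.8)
The plan is to compare both sides \emph{layer by layer}, exploiting the dyadic structure of the scales $\e_k$. Throughout, write $\Delta(\bx):=f(\bxs)-f(\bx)$ for the suboptimality gap and split $\cX$ into the disjoint pieces $\cX_\e$ and $\cX_{(\e_k,\e_{k-1}]}$ for $k=1,\dots,\me$, which partition $\cX$ since $\cX_{\eo}=\cX$. The first observation is that on each piece the denominator $(\Delta(\bx)+\e)^d$ is constant up to universal factors: on $\cX_\e$ one has $\e\le\Delta+\e\le 2\e$, while on the $k$-th layer one has $\e_k<\Delta+\e\le 3\e_k$, using $\e_{k-1}\le 2\e_k$ (which holds for every $k\le\me$ by the choice $\me=\bce{\log_2(\eo/\e)}$) and $\e\le\e_k$. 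Hence each piece of the integral is sandwiched between constant multiples of $\vol(\cX_{(\e_k,\e_{k-1}]})/\e_k^d$ (resp.\ $\vol(\cX_\e)/\e^d$), and the whole problem reduces to comparing these volume-ratios with the packing numbers $\cN(\cX_{(\e_k,\e_{k-1}]},\e_k/L)$ appearing in $\SC(f,\e)$.

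For the lower bound $c\int\le\SC$ I would only use the elementary chain $\cN(E,r)\ge\cM(E,r)\ge\vol(E)/v_r$ (a maximal packing is a covering, and a covering by balls of radius $r$ has total volume at least $\vol(E)$). Applied with $E$ equal to each layer and $r=\e_k/L$, and using $v_{\e_k/L}=v_{\nicefrac{1}{L}}\e_k^d$, this gives $\vol(\cX_{(\e_k,\e_{k-1}]})/\e_k^d\le v_{\nicefrac{1}{L}}\,\cN(\cX_{(\e_k,\e_{k-1}]},\e_k/L)$; together with the trivial bound $\Delta+\e>\e_k$ on the layer (so the integrand is at most $\e_k^{-d}$ there), summing over all pieces yields $\int_\cX\mathrm{d}\bx/(\Delta+\e)^d\le v_{\nicefrac{1}{L}}\,\SC(f,\e)$. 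This direction uses neither the value of $L$ nor \cref{ass:geomCx}, which explains the broader generality claimed in the footnote.

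For the upper bound $\SC\le C\int$ I would argue in the reverse direction, and this is where \cref{ass:geomCx} enters. Fix a layer and a \emph{maximal} $(\e_k/L)$-packing $\bx_1,\dots,\bx_N$ of it, so $N=\cN(\cX_{(\e_k,\e_{k-1}]},\e_k/L)$ and the balls $B_{\e_k/(2L)}(\bx_i)$ are pairwise disjoint. Since $\e_k/(2L)\le\eo/(2L)<r_0$, \cref{ass:geomCx} gives $\vol\brb{B_{\e_k/(2L)}(\bx_i)\cap\cX}\ge\gamma v_{\e_k/(2L)}$, while \lip{}ness gives $\Delta(\by)\le\Delta(\bx_i)+\e_k/2\le 5\e_k/2$ for every $\by$ in that ball, so the integrand is at least $(2/(7\e_k))^d$ there. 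Multiplying out, each packing point contributes at least $\gamma v_{\nicefrac{1}{7L}}$ to the integral, whence $N\,\gamma v_{\nicefrac{1}{7L}}\le\sum_i\int_{B_{\e_k/(2L)}(\bx_i)\cap\cX}\mathrm{d}\by/(\Delta+\e)^d$.

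The one genuine obstacle is that the balls used for layer $k$ spill outside that layer, so summing the last inequality over $k$ naively would count each region once per layer and cost a spurious $\log(\eo/\e)=\Theta(\me)$ factor --- precisely the factor we must avoid here. The fix is a \emph{bounded-overlap} argument: any $\by$ lying in a ball attached to layer $k$ satisfies $\Delta(\by)\in(\e_k/2,\,5\e_k/2]$, and since the $\e_k$ are geometric with ratio $2$, a fixed value of $\Delta(\by)$ can fall into these intervals for at most $3$ distinct indices $k$. Hence $\sum_k\sum_i\int_{B_{\e_k/(2L)}(\bx_i)\cap\cX}\le 3\int_\cX\mathrm{d}\bx/(\Delta+\e)^d$, and combining with the per-layer bound above (plus the analogous estimate for the $\cX_\e$ term using radius $\e/(2L)$) gives $\SC(f,\e)\le\frac{3}{\gamma v_{\nicefrac{1}{7L}}}\int_\cX\mathrm{d}\bx/(\Delta+\e)^d$. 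A crude comparison $3\cdot 7^d\le 128^d$ then shows this sits within the claimed constant $C=1/(\gamma v_{\nicefrac{1}{128L}})$.
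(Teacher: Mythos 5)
Your proposal is correct, and while your first inequality follows essentially the paper's own argument (integrand at most $\e_k^{-d}$ on each layer, volume bounded by covering number times ball volume, covering number at most packing number, giving the same constant $c=1/v_{\nicefrac{1}{L}}$), your second inequality takes a genuinely different route. The paper proceeds in three stages: it lower-bounds the integral by sums of terms $\vol\brb{\cX_{(\e_k,\e_{k-1}]}}/\e_k^d$, then invokes a dedicated lemma (\cref{l:overlap}) to pass from the volumes of the disjoint layers to the volumes of \emph{enlarged, overlapping} layers, and finally applies a packing-versus-volume estimate (\cref{p:pack-vol}, proved via \mink{} sums and \cref{ass:geomCx}) to dominate each packing number by the volume of the corresponding enlarged layer. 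You collapse the last two stages into one: you integrate directly over the disjoint packing balls, using \cref{ass:geomCx} exactly where \cref{p:pack-vol} uses it, and using \lip{}ness to pin the gap on every ball attached to layer $k$ inside the window $(\e_k/2,\,5\e_k/2]$; the double sum over layers and balls is then controlled by a bounded-overlap (multiplicity) argument. That multiplicity bound is precisely the phenomenon that \cref{l:overlap} encodes as a volume comparison, but your formulation sidesteps that lemma's laborious bookkeeping. What the paper's route buys is two reusable standalone statements; what yours buys is brevity and transparency about where the geometry of \cref{ass:geomCx} enters.

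One point of constant-tracking is slightly off, though it does not threaten the result. The scales are not exactly geometric at the last step: $\e_{\me}=\e$ can sit anywhere in $[\e_{\me-1}/2,\,\e_{\me-1}]$, so a fixed gap value can land in the windows $(\e_k/2,\,5\e_k/2]$ for up to $4$ indices $k$, not $3$; moreover the balls attached to $\cX_\e$ (where the gap lies in $[0,\,3\e/2]$) can also intersect the layer balls, so the correct global multiplicity bound is $4$ rather than $3$, and your final inequality should read $\SC(f,\e)\le 4\brb{\gamma v_{\nicefrac{1}{7L}}}^{-1}\int_\cX \mathrm{d}\bx/\brb{f(\bxs)-f(\bx)+\e}^d$. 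Since $4\cdot 7^d\le 128^d$ for all $d\ge 1$, this still sits comfortably inside the claimed constant $C=1/(\gamma v_{\nicefrac{1}{128L}})$.
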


\textcolor{black}{Remark that in Theorem \ref{t:hansen}, the integral is multiplied by $L^d$ in the two inequalities. This is to compensate for the fact that this integral does not depend on $L$, while $\SC(f,\e)$ scales like $L^d$ for a fixed $f$ as $L \to \infty$.}

\begin{proof}
Fix any $\e \in (0,\eo]$ and
recall that $\me := \bce{ \log_2(\nicefrac{\eo}{\e}) }$, $\e_{\me} := \e$, and for all $k \le \me -1$, $\e_k := \eo 2^{-k}$.
Partition the domain of integration $\cX$ into the following $\me + 1$ sets: 
the set of $\e$-optimal points $\cX_{\e}$ and the $\me$ layers $\cX_{(\e_k, \ \e_{k-1}]}$, for $k \in [\me]$.
We begin by proving the first inequality:
\begin{align*}
    \int_\cX \frac{\mathrm{d}\bx}{\brb{ f(\bxs) - f(\bx) + \e }^d}
&
\le
    \frac{\vol (\cX_\e) }{\e^d}
    + \sum_{k=1}^{\me}\frac{ \vol \brb{ \cX_{(\e_k, \ \e_{k-1}]} } }{ (\e_k + \e)^d}
\\
&
\le
    \frac{\cM \brb{ \cX_{\e}, \ \frac{\e}{L} } \cdot v_1 \brb{ \frac{\e}{L} }^d }{ \e^d }
    + \sum_{k=1}^{\me}\frac{ \cM \brb{ \cX_{(\e_k, \ \e_{k-1}]}, \ \frac{\e_k }{L} } \cdot v_1 \brb{ \frac{\e_k }{L} }^d }{ \e_k ^d }
\\
&
\le
    \frac{v_1}{L^d} \lrb
    {
        \cN \lrb{ \cX_{\e}, \ \frac{\e}{L} } + 
        \sum_{k=1}^{\me} \cN \lrb{ \cX_{(\e_k, \ \e_{k-1}]}, \ \frac{\e_k }{L} }
    }\;,
\end{align*}
where the first inequality follows by lower bounding $f(\bxs) - f$ with its infimum on the partition, the second one by dropping $\e>0$ from the second denominator and upper bounding the volume of a set with the volume of the balls of a smallest $\nicefrac{\e_k}{L}$-cover, and the last one by the fact that covering numbers are always smaller than packing numbers (we recall this known result in the \suppl{}, \cref{l:wainwright}, \cref{eq:wainwright}).
This proves the first part of the theorem.

For the second one, we have
\begin{align*}
&
    \int_\cX \frac{\mathrm{d}\bx}{\brb{ f(\bxs) - f(\bx) + \e }^d}
\ge
    \frac{\vol (\cX_{\e}) }{(\e + \e)^d}
    + \sum_{k=1}^{\me}\frac{ \vol \brb{ \cX_{(\e_k, \ \e_{k-1}]} } }{ (\e_{k-1} + \e)^d}
\\
&
\hspace{28.61375pt}
\ge
    \frac{1}{2^d}
    \frac{\vol (\cX_{\e}) }{\e^d}
    + 
    \frac{1}{4^d}
    \sum_{k=1}^{\me}\frac{ \vol \brb{ \cX_{(\e_k, \ \e_{k-1}]} } }{ \e_{k}^d}
\ge
    \frac{1}{32^d} \lrb{
        \frac{\vol (\cX_{2 \e }) }{ \e^d }
        + \sum_{k=1}^{\me}\frac{ \vol \lrb{ \cX_{ \left(\frac{1}{2} \e_k, \ 2 \e_{k-1}\right]} } }{ \e_{k-1}^d}
    }
\\
&
\hspace{28.61375pt}
\ge
    \frac
    { 
        \cN\lrb{\cX_{\e}, \ \frac{\e}{L}} \vol\lrb{ \frac{\e}{2L} B_1 } 
    }
    {
        (32 \, \e)^d / \gamma 
    }
    + \sum_{k=1}^{\me} \frac
    { 
        \cN\lrb{\cX_{(\e_k, \ \e_{k-1}]}, \ \frac{\e_k}{L}} \vol\lrb{ \frac{\e_k}{2L} B_1 } 
    }
    { 
        (32 \, \e_{k-1} )^d / \gamma 
    }
\\
&
\hspace{28.61375pt}
\ge
    \gamma v_{\nicefrac{1}{64L}}
    \cN\lrb{\cX_{\e}, \ \frac{\e}{L}}
    + 
    \gamma v_{\nicefrac{1}{128L}} \sum_{k=1}^{\me} 
    \cN\lrb{\cX_{(\e_k, \ \e_{k-1}]}, \ \frac{\e_k}{L}}
    \;,
\end{align*}
where the first inequality follows by upper bounding $f(\bxs) - f$ with its supremum on the partition, the second one by $\e \le \e_{k-1}$ (for all $k \in [\me+1]$) and $\e_{k-1}\le 2\e_k$ (for all $k \in [\me]$), the third one by lower bounding the sum of disjoint layers with that of overlapping ones (proved in the \suppl{}, \Cref{l:overlap}), and the fourth one by the elementary inclusions 
$\cX_{2 \e } \supseteq \cX_{\frac{3}{2} \e}$
and
$\cX_{\left(\frac{1}{2} \e_k, \ 2\e_{k-1}\right]} \supseteq \cX_{\left(\frac{1}{2} \e_k, \ \frac{3}{2} \e_{k-1}\right]}$ (for all $k \in [\me]$) followed by a relationship between packing numbers and volumes (proved in the \suppl{}, \Cref{p:pack-vol}) that holds under \cref{ass:geomCx}.
\end{proof}

\section{Optimality of \texorpdfstring{$\SC(f,\e)$}{SC(f,epsilon)}} \label{section:lower:bounds}

We begin this section by proving an $f$-dependent lower bound on the sample complexity of certified algorithms that matches the upper bound $\SC(f,\e)$ on the sample complexity of the $\cdoo$ algorithm (\Cref{prop:DOO:certified}), up to a $\log(1/\e)$ term\textcolor{black}{, a dimension-dependent constant, and the term $(1-\Lip(f)/L)^d$}. 

The proof of this result differs from those of traditional worst-case lower bounds.
The idea is to build a \emph{local} worst-case analysis, introducing a weaker notion of sample complexity $\tau$ that is smaller than $\sigma(A,f,\e)$. Then we further lower bound this quantity by finding adversarial perturbations of the target function $f$ with the following opposing properties. First, these perturbations have to be similar enough to $f$ so that running $A$ on them would return the same recommendations for sufficiently many rounds. Second, they have to be different enough from $f$ so that enough rounds have to pass before being able to certify sub-$\e$ accuracy. 
We recall that $\me := \bce{ \log_2(\eo/\e) }$ \textcolor{black}{and that $ \SC(f,\e)$ was defined in \eqref{eq:certified:bound}.}

\begin{theorem} \label{thm:lower:bound:sample:dependent}
The sample complexity of any certified algorithm $A$ satisfies
\[
\sigma(A,f,\epsilon) 
>
\frac{\textcolor{black}{ c'_d (1-\Lip(f)/L)^d} }{1+m_{\epsilon}}
S_\mathrm{C}(f,\e)
\]
for some constant $c'_d$ (that can be taken as $c'_d = 2^{-2} 2^{-7d}$), any \lip{} function $f\in \cF_L$ \textcolor{black}{(see \eqref{e:lip})} and all $\e \in (0,\eo]$.
\end{theorem}

\begin{proof}
Fix any $f\in \cF_L$ and an accuracy $\e\in (0,\eo]$.
We begin by defining the tightest error certificate that a certified algorithm $A$ could return based on its first $n$ observations of $f$. Formally,
\[
    \err_n(A)
:=
    \sup\bcb{\max(g) - g(\bxs_n) : g \text{ is } L\text{-\lip{} and } f(\bx) = g(\bx) \text{ for all } x\in \{\bx_1,\ldots,\bx_n\}}
\]
where $\bx_i = \bx_i(A,f)$ and $\bxs_i=\bxs_i(A,f)$ are the query and recommendation points chosen by $A$ at time $i$ when run on $f$ (to lighten the notation, we omit the explicit dependencies on $A$ and $f$ of $\bx_i$ and $\bxs_i$).
In particular, $\max(f)-f(\bxs_n)\le \err_n(A)$.
Based on this quantity, we define a corresponding notion of optimal sample complexity $\tau$ as the smallest number of rounds $n$ that the best certified algorithm $A'$ needs in order to guarantee that $\err_n(A')\le \e$. Formally,
\begin{equation} \label{eq:def:tau}
\textstyle{
    \tau
:=
    \min
    \bcb{
    n \in \Ns 
    :
    \inf_{A'}  \brb{ \err_n(A') } \leq \epsilon
    } \;,
}
\end{equation}
where the infimum is over all certified algorithms $A'$.
It is immediate to prove that $\tau$ is finite, by considering an algorithm that queries a dense sequence of points (independently of the observed function values) and outputs a recommendation that maximizes the observed values.

Crucially, $\tau$ lower bounds the sample complexity $\sigma(A,f,\e)$ of any certified algorithm $A$.
At a high level, this makes intuitive sense because $\tau$ guarantees a weaker property: that the best certificate of the best algorithm is small, while $\sigma(A,f,\e)$ controls the certificate of the specific algorithm $A$.
We defer a formal proof of this statement to the \suppl{}, Section~\ref{s:missing-details-lower}.

Since 
$
    \sigma(A,f,\e) \ge \tau
$,
to prove the theorem
it is sufficient to show that, \textcolor{black}{with $c := 2^{-2} 2^{-7d} (1-\Lip(f)/L)^d$}, 
\[
    \tau 
> 
    \frac{ c }{1+m_{\epsilon}}
    S_\mathrm{C}(f,\e) \;.
\]
Let $\textcolor{black}{Q} := 16 L / \brb{ L - \Lip(f) }$.
If $S_\mathrm{C}(f,\e) / (1+m_{\epsilon}) < 3 (8\textcolor{black}{Q})^d$, then 
$\tau \ge 1 > 3/4 > c S_\mathrm{C}(f,\e) / (1+m_{\epsilon})$.
Assume from now on that $S_\mathrm{C}(f,\e) / (1+m_{\epsilon}) \geq 3 (8\textcolor{black}{Q})^d$.

The idea now is to upper bound the sum of $1+\me$ packing numbers that define $\SC(f,\e)$ in \eqref{eq:certified:bound} with the largest one multiplied by $1+\me$.
This way, $\SC(f,\e) / (1+ m_{\epsilon})$ can be controlled by (an upper bound of) the largest summand in $S_\mathrm{C}(f,\e)$.
Let $\tilde{\epsilon}$ be the scale achieving the maximum in \eqref{eq:certified:bound}, that is
\[
\tilde{\epsilon} =  
\begin{cases}
\e \;,
& \text{if }
 \cN \lrb{ \cX_{ \e}, \ \frac{\e}{L} }
 \geq 
  \max_{i \in \{1,\ldots,\me\}} \cN \lrb{ \cX_{\left(\e_i, \ \e_{i-1} \right]}, \ \frac{\e_{i} }{ L} } \;,
  \\
\e_{i^\star-1} \;,
 & \text{otherwise, where }
 i^\star \in \argmax_{i \in \{1,\ldots,m_{\epsilon}\} }
 \cN \lrb{ \cX_{\left(\e_i, \ \e_{i-1} \right]}, \ \frac{\e_{i} }{L} } \;.
\end{cases}
\]
Since 
$
\cN \lrb{ \cX_{ \e}, \ \e/L }
\leq 
\cN \lrb{ \cX_{ \e}, \ \e/2L }
$
and
$ \cN \lrb{ \cX_{\left(\e_i, \ \e_{i-1} \right]}, 
\ \e_{i} / L } \leq \cN \lrb{ \cX_{\ \e_{i-1}}, \ \e_{i-1} /2 L }$,
we then have
$
S_\mathrm{C}(f,\e)
\leq (m_{\epsilon}+1)
 \cN \lrb{ \cX_{ \tilde{\epsilon} }, \ \tilde{\epsilon}/2L}$. 
Let now $n \leq c S_\mathrm{C}(f,\e) /(1+ m_{\epsilon})$. We then have
$n \leq 
c \cN \lrb{ \cX_{ \tilde{\epsilon} }, \ \tilde{\epsilon}/2L}$.
From a known property of packing numbers (\suppl, \cref{lem:changing radius}),
\[
\textstyle{
\cN \lrb{ \cX_{ \tilde{\epsilon} }, \ \frac{\textcolor{black}{Q} \tilde{\epsilon}}{L} }
\geq \left( \frac{1}{8\textcolor{black}{Q}} \right)^d  \cN \lrb{ \cX_{ \tilde{\epsilon} }, \ \frac{\tilde{\epsilon}}{2L} }
\geq 
\left( \frac{1}{8\textcolor{black}{Q}} \right)^d
 \frac{S_\mathrm{C}(f,\e)}{m_{\epsilon} +1} \geq 3 \;,
}
\]
by our initial assumption.
Then we have
$
n \leq c (8\textcolor{black}{Q})^d \cN \lrb{ \cX_{ \tilde{\epsilon} }, \ \textcolor{black}{Q} \tilde{\epsilon}/L}$.
Since $c (8\textcolor{black}{Q})^d =1/4$, we thus obtain
$n \leq \cN \lrb{ \cX_{ \tilde{\epsilon} }, \ \textcolor{black}{Q} \tilde{\epsilon}/L} - 2$.

Consider a certified algorithm $A$ for $L$-\lip{} functions.
Fix a $(\textcolor{black}{Q} \tilde{\epsilon} / L)$-packing $\tilde \bx_1 , \ldots , \tilde \bx_N$ of $\cX_{ \tilde{\epsilon}}$ with cardinality $N = \cN \lrb{ \cX_{ \tilde{\epsilon} }, \ \textcolor{black}{Q} \tilde{\epsilon}/L }$. Then the open balls of centers $\tilde \bx_1 , \ldots , \tilde \bx_N$ and radius $\textcolor{black}{Q} \tilde{\epsilon} / 2L$ are disjoint and two of them, with centers, say, $\tilde{\bx}_1$ and $\tilde{\bx}_2$, do not contain any of the points $\bx_1, \ldots , \bx_n$ queried by $A$ when it is run on $f$. Let, for $\bx \in \cX$, 
\[
\textstyle{
    g_{\tilde{\epsilon}}(\bx)
:=
    \brb{ 8 \tilde{\epsilon} - \frac{16L}{\textcolor{black}{Q}} \lno{ \bx - \tilde{\bx}_1 } }
    \I \bcb{ \bx \in \cX \cap B_{\textcolor{black}{Q} \tilde{\epsilon} / 2L}(\tilde{\bx}_1 ) } \;.
}
\]
Then $g_{\tilde{\epsilon}}(x)$ is $16L / \textcolor{black}{Q} = L - \Lip(f)$ Lipschitz. Hence $f + g_{\tilde{\epsilon}}$ and $f - g_{\tilde{\epsilon}}$ are $L$-Lipschitz. 
Note that $f$, $f + g_{\tilde{\epsilon}}$ and $f - g_{\tilde{\epsilon}}$ coincide on the points $\bx_1, \ldots , \bx_n$ that $A$ queries when it is run on $f$. 
As a consequence, $A$ queries the same points and returns the same recommendation $\bxs_n$ when it is run on any of the three functions $f$, $f + g_{\tilde{\epsilon}}$, and $f - g_{\tilde{\epsilon}}$. 

Consider first the case $\bxs_n \in B( \tilde{\bx}_1 , \textcolor{black}{Q} \tilde{\epsilon} / 4L )$. Then, we have, by definition of $g_{\tilde{\epsilon}}$ and the fact that $\tilde{\bx}_2 \in \mathcal{X}_{\tilde{\epsilon}}$,
$
f( \tilde{\bx}_2 ) - g_{\tilde{\epsilon}}( \tilde{\bx}_2 )
- f( \bxs_n ) + g_{\tilde{\epsilon}}( \bxs_n )
\geq 
- \tilde{\epsilon} 
+ 8 \tilde{\epsilon}
- 
\frac{16L}{\textcolor{black}{Q}} \frac{\textcolor{black}{Q} \tilde{\epsilon}}{4L}
=
3 \tilde{\epsilon}
$.

Now consider the case $\bxs_n \notin B( \tilde{\bx}_1 , \textcolor{black}{Q} \tilde{\epsilon} / 4L )$. Then, we have, by definition of $g_{\tilde{\epsilon}}$ and the fact that $\tilde{\bx}_1 \in \mathcal{X}_{\tilde{\epsilon}}$,
$
f( \tilde{\bx}_1 ) + g_{\tilde{\epsilon}}( \tilde{\bx}_1 )
- f( \bxs_n ) - g_{\tilde{\epsilon}}( \bxs_n )
\geq 
- \tilde{\epsilon} 
+ 8 \tilde{\epsilon}
- 
8 \tilde{\epsilon}
+
\frac{16L}{\textcolor{black}{Q}} \frac{\textcolor{black}{Q} \tilde{\epsilon}}{4L}
=
3 \tilde{\epsilon}
$.

Therefore, in both cases $\err_n(A) \ge 3 \tilde{\epsilon} > \e $. 
Repeating the same construction from any other certified algorithm $A'$, we obtain that $\inf_{A'} \brb{ \err_n(A') } > \e $. 
Since this has been shown for any $n \leq c S_\mathrm{C}(f,\e) /(1+ m_{\epsilon})$, by definition of $\tau$ we can conclude that $\tau
> 
c S_\mathrm{C}(f,\e) /(1+m_{\epsilon})$.
\end{proof}

Putting together \Cref{prop:DOO:certified,thm:lower:bound:sample:dependent} shows that the sample complexity of $\cdoo$ applied to any \lip{} function $f\in \cF_L$ is of order $\SC(f,\e)$ and no certified algorithm $A$ can do better, \emph{not even if $A$ knows $f$ exactly}.
This is a striking difference with the classical non-certified setting in which the best algorithm for each fixed function $f$ has trivially sample complexity $1$.
In particular, for non-pathological sets $\cX$, combining \Cref{prop:DOO:certified,thm:lower:bound:sample:dependent} with \cref{t:hansen} gives the following near-tight characterization of the optimal sample complexity of certified algorithms.

\begin{theorem}
\label{t:main-thm}
Let $L>0$ and suppose that \cref{ass:geomCx} holds with $r_0 > \nicefrac{\eo}{2L}$.
Then, there exist two constants $c_d,C_d>0$ such that, for all \lip{} functions $f\in\cF_L$ and any accuracy $\e \in (0,\eo]$, 
the optimal sample complexity of any certified algorithm $A$ satisfies
\begin{multline*}
   \textcolor{black}{
\frac{
c_d
\left( 1- \frac{\Lip(f)}{L} \right)^d
}{
 1+   \bce{ \log_2(\eo/\e) } 
}
}
    \int_\cX \frac{\mathrm{d}\bx}{\brb{ f(\bxs) - f(\bx) + \e }^d}
\le
    \inf_A \sigma(A,f,\epsilon) 
\\
\le  
    \sigma(\text{\emph{c.DOO}}, f, \e) 
\le
    \textcolor{black}{C_d}
    \int_\cX \frac{\mathrm{d}\bx}{\brb{ f(\bxs) - f(\bx) + \e }^d}
 \;.
\end{multline*}
For example, one can take $c_d = \frac{L^d}{4\cdot 2^{7d} v_1}$ and $C_d = \left(2+
K
\brb{
\mathbf{1}_{ \nu/ R \geq 1 }
+
\mathbf{1}_{ \nu/ R < 1 }
(
\fracc{4 R}{\nu}
)^d
} \right) \frac{(128L)^d }{ v_1 \gamma}$.
\end{theorem}

\paragraph{The boundary case $L=\Lip(f)$.}

The previous results show that, for any function $f$ whose best Lipschitz constant $\Lip(f)$ is bounded away from $L$, the quantity $\SC(f,\e)$ (or the equivalent integral bound) characterizes the optimal sample complexity up to log terms and dimension-dependent constants. For the sake of completeness, we now discuss the boundary case in which $L=\Lip(f)$, i.e., when the best \lip{} constant of the target function is known exactly by the algorithm.
As we mentioned earlier, this case is not of great relevance in practice, as one can hardly think of a scenario in which $f$ is unknown but the learner has somehow perfect knowledge of its smallest \lip{} constant $\Lip(f)$.
It could, however, be of some theoretical interest.

Next we show an interesting difference between dimensions $d=1$ and $d \geq 2$. In dimension one, $\SC(f,\e)$ nearly characterizes the optimal sample complexity even when $L=\Lip(f)$, as shown below. The proof is deferred to Section~\ref{s:missing-lower-1} of the \suppl{}.

\begin{proposition}
\label{prop:lower:bound:dim:one}
If $d=1$, let $c=2^{-8}/3$.
Then, the sample complexity of any certified algorithm $A$ satisfies, for any $L$-\lip{} function $f$ and all $\e \in (0,\eo]$,
$
\sigma(A,f,\epsilon) 
>
c
S_\mathrm{C}(f,\e) / (1+m_{\epsilon})
$.
\end{proposition}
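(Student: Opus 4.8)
The plan is to follow the skeleton of the proof of \Cref{thm:lower:bound:sample:dependent} and only replace the perturbation step by a one-dimensional construction that needs no slack in the \lip{} constant. Concretely, I would first reduce $\sigma(A,f,\e)$ to the weaker complexity $\tau$ exactly as before, then upper bound $\SC(f,\e)$ by $(1+\me)$ times its largest summand, say $\cN(\cX_{\tilde{\epsilon}},\tilde{\epsilon}/(2L))$ for a scale $\tilde{\epsilon}\in\{\e,\e_0,\ldots,\e_{\me-1}\}$. This reduces the statement to a single-scale claim: for an \emph{absolute} constant $K$ (in contrast with the main proof, where $K\asymp L/(L-\Lip(f))$) and every $n\lesssim \cN(\cX_{\tilde{\epsilon}},K\tilde{\epsilon}/L)$, one has $\inf_{A'}\err_n(A')>\e$. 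The factor $(1-\Lip(f)/L)^d$ in \Cref{thm:lower:bound:sample:dependent} comes precisely from the inflated scale $K\tilde{\epsilon}/L$ needed there, so keeping $K$ absolute is exactly what removes it.

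The key one-dimensional observation is that, thanks to the total order of $\R$, inside any query-free gap $(x_L,x_R)$ I may replace $f$ by any $L$-\lip{} function matching the bracketing query values $f(x_L),f(x_R)$ --- in particular by a full-slope tent or a full-slope valley --- and the glued function is automatically $L$-\lip{}, \emph{regardless of} $\Lip(f)$. Hence a perturbation of height $\Theta(\tilde{\epsilon})$ costs only width $\Theta(\tilde{\epsilon}/L)$, rather than width $\Theta\brb{\tilde{\epsilon}/(L-\Lip(f))}$ as for the additive bump $g_{\tilde{\epsilon}}$ of \Cref{thm:lower:bound:sample:dependent}. Having fixed two query-free balls around packing centers $\tilde{\bx}_1,\tilde{\bx}_2\in\cX_{\tilde{\epsilon}}$, I would then run the same case split on the recommendation: if $\bxs_n$ lies in an inner ball around $\tilde{\bx}_1$, dig a slope-$L$ valley there to push $g(\bxs_n)$ down by $\Theta(\tilde{\epsilon})$ while keeping $g(\tilde{\bx}_2)=f(\tilde{\bx}_2)\ge\max(f)-\tilde{\epsilon}$; otherwise, raise a slope-$L$ tent to create a peak above $f(\bxs_n)$. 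In both cases the aim is $\err_n\ge 3\tilde{\epsilon}>\e$, which should yield the stated constant $c=2^{-8}/3$ after bookkeeping.

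The delicate point --- and the step I expect to be the main obstacle --- is the ``raise'' case when $f$ already saturates slope $L$ into the gap and the algorithm recommends a \emph{queried} point with $f(\bxs_n)$ close to $\max(f)$: then $g(\bxs_n)$ is pinned and a single tent over a query-free near-optimal gap cannot be guaranteed to exceed $\max(f)$ by more than the upper envelope allows, which a priori may be only $\max(f)-\tilde{\epsilon}$. Resolving this requires genuinely one-dimensional geometry: I would argue that certifying $\sup_{\bx}\bar f(\bx)\le \max(f)+\e$, where $\bar f(\bx):=\min_i\brb{f(\bx_i)+L\lno{\bx-\bx_i}}$ is the tightest upper envelope, forces the queries to $(K\tilde{\epsilon}/L)$-cover $\cX_{\tilde{\epsilon}}$ on the line, because on $\R$ the envelope at a point is controlled by the two nearest bracketing queries, so any uncovered near-optimal gap produces an envelope overshoot exceeding $\e$. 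This is exactly the mechanism that is \emph{absent} in dimension $d\ge 2$, where a single central query can make $\bar f\equiv f$ for a cone and thus certify in one step (\Cref{prop:counter:example:Nepsilon}); pinning down this covering-versus-packing estimate for the envelope on the line, and checking it survives the largest-summand reduction with an absolute $K$, is the crux of the argument.
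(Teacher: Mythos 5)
Your first two steps (reducing $\sigma$ to $\tau$, then bounding $\SC(f,\e)$ by $(1+\me)$ times its largest summand at a scale $\tilde\e$) and your key observation that one-dimensional gluing of $L$-Lipschitz pieces needs no slack in the Lipschitz constant all coincide with the paper's proof. The gap is in the perturbation step. You transplant the two-query-free-balls structure of \Cref{thm:lower:bound:sample:dependent}, and, exactly as you anticipate, the ``raise'' case then breaks: if $f$ descends at full slope on both sides of the packing point, every admissible $g$ is bounded on the ball by the upper envelope through the ball's endpoints, whose peak is only $\max(f)$, so no tent supported in that ball creates a new maximum. The trouble is that your proposed repair is false: certifying $\err_n\le\e$ does \emph{not} force the queries to $(K\tilde\e/L)$-cover $\cX_{\tilde\e}$. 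Counterexample: let $f$ drop to depth $a\gg\tilde\e$ at two points $x_L<x_R$, rise at full slope $L$ between them to a single point $z$ with $f(z)=\max(f)-\tilde\e$, then descend at full slope back to $x_R$, so that $x_R-x_L=2(a-\tilde\e)/L$; if the algorithm queries the two feet $x_L,x_R$, the envelope they induce equals $f$ at $z$ (slope saturation) and peaks at $\max(f)-\tilde\e$ over the whole interval, so it never overshoots, and with a near-optimal query plus dense queries elsewhere the algorithm certifies $\err_n\le\e$ even though $z\in\cX_{\tilde\e}$ is at distance $(a-\tilde\e)/L\gg K\tilde\e/L$ from every query. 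Hence no covering lemma of the kind you want can hold; only a packing-type statement does, and your two-ball case analysis cannot be salvaged by it (in this example both balls around such hidden points are query-free, yet certification succeeds).

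The paper's proof supplies the missing idea, which uses the order structure of $\R$ differently. Sort a $(K\tilde\e/L)$-packing $\tilde x_1<\dots<\tilde x_N$ of $\cX_{\tilde\e}$ (the paper takes $K=8$), consider the $\lfl{N/2}-1$ disjoint double-gap segments $(\tilde x_1,\tilde x_3),(\tilde x_3,\tilde x_5),\dots$, and pigeonhole to find one, $(\tilde x_i,\tilde x_{i+2})$, containing no query. The perturbations $h_{\pm,\tilde\e}$ are then anchored at the packing points $\tilde x_i,\tilde x_{i+2}$ themselves, which are $\tilde\e$-optimal \emph{by definition of} $\cX_{\tilde\e}$: the tent climbs from $f(\tilde x_i)\ge\max(f)-\tilde\e$ at slope $L$ over a width exceeding $K\tilde\e/L$, so its peak clears $\max(f)+7\tilde\e$ no matter how $f$ saturates its slope, and the valley is symmetric; a five-case analysis on the position of $\bxs_n$ inside the segment then yields $\err_n\ge 3\tilde\e>\e$ in all cases. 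It is precisely the choice of anchors --- near-optimal packing points spanning a full packing gap, rather than uncontrolled boundary points of a ball of radius $K\tilde\e/2L$ around a single packing point --- that defeats the saturation obstruction you identified. This is also consistent with your intuition about \Cref{prop:counter:example:Nepsilon}: what dimension one forces is a query in essentially every second gap between consecutive packing points of $\cX_{\tilde\e}$, not a covering of $\cX_{\tilde\e}$.
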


In contrast, in dimension $d \geq 2$, there are examples of functions $f$ with $\Lip(f)=L$ for which the optimal sample complexity is much smaller than $\SC(f,\e)$. In the proposition below, the fact that $\Lip(f)=L$, the specific shape of $f$, and the specific initialization point enable the learner to find \emph{and certify} a maximizer of $f$ in only two rounds, while $\SC(f,\e)$ grows polynomially in $1/\epsilon$. The proof is deferred to Section~\ref{s:missing-lower-2} of the \suppl{}, together with a refresher on  $\mathrm{PS}$.

\begin{proposition} \label{prop:counter:example:Nepsilon}
Let $d \ge 2$, $\cX := B_1$, and $\lno{\cdot}$ be a norm.
The certified \piya{}-Shubert algorithm $\mathrm{PS}$ with initial guess $\bx_1 := \bzero$ is a certified algorithm satisfying, for the $L$-\lip{} function $f := L \lno{\cdot}$ and any $\e \in (0,\eo \textcolor{black}{)}$, $\sigma(\mathrm{PS}, f, \e) = 2 \ll \textcolor{black}{c_d} / \e^{d-1} \le \SC(f,\e)$, for some constant $\textcolor{black}{c_d}>0$.
\end{proposition}

Note that the same upper bound of $2$ could be proved in dimension $d=1$ for $f(x)=L|x|$, but in this case, $S_\mathrm{C}(f,\e) / (1+m_{\epsilon})$ is of constant order. There is therefore no contradiction with Proposition~\ref{prop:lower:bound:dim:one}.

Also, remark that Proposition~\ref{prop:counter:example:Nepsilon} does not solve the question of characterizing the optimal sample complexity for any $f$ such that $\Lip(f)=L$. We conjecture that the drastic improvement in the sample complexity shown in the specific example above is not possible for all other functions $f$ with $\Lip(f)=L$. For instance, we believe that $S_\mathrm{C}(f,\e)$ remains the right quantity for functions for which $\babs{ f(x)-f(y)}/\lno{ x-y }$ is close to $\Lip(f)$ only far away from the set of maximizers.

\section{Conclusions and Open Problems}
\textbf{Contributions.}
We studied the sample complexity of certified zeroth-order \lip{} optimization.
We first showed that the sample complexity of the computationally tractable $\cdoo$ algorithm scales with the quantity $\SC(f,\e)$ introduced in \cref{eq:certified:bound} (\cref{prop:DOO:certified}).
We then characterized this quantity in terms of the integral expression $\int_{\cX} \mathrm{d}\bx/( \max(f) - f(\bx) + \epsilon )^d$ (\cref{t:hansen}), solving as a corollary a long-standing open problem in \cite{hansen1991number}. Finally we proved an instance-dependent lower bound (Theorem~\ref{thm:lower:bound:sample:dependent}) showing that this integral characterizes (up to $\log$ factors) the optimal sample complexity of certified zeroth-order \lip{} optimization in any dimensions $d \ge 1$ whenever the smallest \lip{} constant of $f$ is not known exactly by the algorithm (\cref{t:main-thm}). 

\textbf{Limitations and open problems.}
There are some interesting directions that would be worth investigating in the future but we did not cover in this paper. First, even if the results of \Cref{s:improvedDOO} could be easily extended to pseudo-metric spaces as in \citet{Mun-14-MonteCarloTreeSearch} and related works, our other results are finite-dimensional and exploit the normed space structure.

Second, our lower and upper bounds involve constants (with respect to $f$ and $\e$) that depend exponentially on $d$. Although the dependency in $d$ was intentionally not optimized here, it would be beneficial to obtain tighter constants. However, we believe that removing the exponential dependency in $d$ altogether, while keeping the same level of generality as this paper, would be very challenging, if at all possible. There is also room for improvement in the specific (and unrealistic) case $L=\Lip(f)$, for which characterizing the optimal sample complexity is still open in dimensions $d \geq 2$.

A third question is related to the general notion of adaptivity to smoothness (e.g., \citealt{munos2011optimistic,BGV19-ParameterFreeOptimization}). Note that when no upper bound $L$ on $\Lip(f)$ is available, it is in general not possible to issue a finite certificate $
\xi_n$ satisfying $\max(f)-f(\bxs_n)\le \xi_n$ for any $f$ (as arbitrarily steep bumps can be added to $f$). 
Hence, while having some upper bound $L$ is necessary for certified optimization, a natural question is whether $L$ could be much larger than $\Lip(f)$ without penalizing significantly the sample complexity. Theorem \ref{thm:lower:bound:sample:dependent} provides a negative answer since, for a fixed $f$ for which $\bxs$ is in the interior of $\cX$, when $L \to \infty$, the lower bound on $\sigma(A,f,\e)$ is of order $S_C(f,\e) / \log(1 / \e)$ and $S_C(f,\e)$ grows like $L^d$. It would thus be interesting to see whether a relaxed version of certification would make sense for cases when only a very coarse upper bound $L$ on $\Lip(f)$ is known. 

Finally, it would also be interesting to see if, using randomized algorithms (still with exact observations of $f$) and weakening the notion of certification (requiring it to hold only with some prescribed probability), smaller upper bounds could be obtained. Also, an important question is to quantify the sample complexity increase yielded by having
only noisy observations of $f$ \citep{bubeck2011x,kleinberg2019bandits}. We expect the modified integral $\int_{\cX} \mathrm{d}\bx/( \max(f) - f(\bx) + \epsilon )^{d+2}$ to play a role for this, where our intuition for the ``$+2$'' is that to see an $\e$-big gap through the variance, the learner has to draw roughly $\e^{-2}$ samples.

\begin{ack}
The work of Tommaso Cesari and S{\'e}bastien Gerchinovitz has benefitted from the AI Interdisciplinary Institute ANITI, which is funded by the French ``Investing for the Future – PIA3'' program under the Grant agreement ANR-19-P3IA-0004. S\'{e}bastien Gerchinovitz gratefully acknowledges the support of the DEEL project\footnote{\url{https://www.deel.ai/}}. This work benefited from the support of the project BOLD from the French national research agency (ANR).
\end{ack}

\bibliographystyle{ACM-Reference-Format}
\bibliography{biblio}

\section*{Checklist}


\begin{enumerate}

\item For all authors...
\begin{enumerate}
  \item Do the main claims made in the abstract and introduction accurately reflect the paper's contributions and scope?
    \answerYes{The outline in \cref{sec:outline} provides pointers to where the claimed contributions of the paper are provided.}
  \item Did you describe the limitations of your work?
   \answerYes{In particular, \textcolor{black}{the conclusion discusses limitations and open problems.}}
     
  \item Did you discuss any potential negative societal impacts of your work?
   \answerNA{This is a theoretical/foundation work that adds to the theory and methodology of optimization. As for any such contributions, the positive or negative societal impact will depend on the application case. We do not promote any harmful use of this theory, but we expand on the existing knowledge.}
   
  \item Have you read the ethics review guidelines and ensured that your paper conforms to them?
    \answerYes{See previous question.}
\end{enumerate}

\item If you are including theoretical results...
\begin{enumerate}
  \item Did you state the full set of assumptions of all theoretical results?
   \answerYes{All our results explicitly refer to their required assumptions. Some general assumptions that hold throughout the paper are also stated in the introduction.} 
	\item Did you include complete proofs of all theoretical results? \answerYes{Some technical details are deferred to the \suppl{}.}
\end{enumerate}

\item If you ran experiments...
\begin{enumerate}
  \item Did you include the code, data, and instructions needed to reproduce the main experimental results (either in the supplemental material or as a URL)?
    \answerNA{We did not ran experiments.}
  \item Did you specify all the training details (e.g., data splits, hyperparameters, how they were chosen)?
    \answerNA{}
	\item Did you report error bars (e.g., with respect to the random seed after running experiments multiple times)?
    \answerNA{}
	\item Did you include the total amount of compute and the type of resources used (e.g., type of GPUs, internal cluster, or cloud provider)?
    \answerNA{}
\end{enumerate}

\item If you are using existing assets (e.g., code, data, models) or curating/releasing new assets...
\begin{enumerate}
  \item If your work uses existing assets, did you cite the creators?
   \answerNA{We did not use existing assets (code, data or models) nor cure/release new assets (code, data or models).}
  \item Did you mention the license of the assets?
    \answerNA{}
  \item Did you include any new assets either in the supplemental material or as a URL?
    \answerNA{} 
  \item Did you discuss whether and how consent was obtained from people whose data you're using/curating?
    \answerNA{}
  \item Did you discuss whether the data you are using/curating contains personally identifiable information or offensive content?
    \answerNA{}
\end{enumerate}

\item If you used crowdsourcing or conducted research with human subjects...
\begin{enumerate}
  \item Did you include the full text of instructions given to participants and screenshots, if applicable?
   \answerNA{We did not use crowdsourcing nor conducted research with human subjects.}
  \item Did you describe any potential participant risks, with links to Institutional Review Board (IRB) approvals, if applicable?
    \answerNA{}
  \item Did you include the estimated hourly wage paid to participants and the total amount spent on participant compensation?
    \answerNA{}
\end{enumerate}

\end{enumerate}


\newpage

\begin{center}

  \vbox{%
    \hsize\textwidth
    \linewidth\hsize
    \vskip 0.1in
          \hrule height 4pt
          \vskip 0.25in
          \vskip -\parskip%
    \centering
    {\LARGE{\bf \papertitle } \\ \smallskip Supplementary Material \par}
          \vskip 0.29in
          \vskip -\parskip
          \hrule height 1pt
          \vskip 0.09in%
      \begin{tabular}[t]{c}\bf\rule{0pt}{24pt}
       Fran\c{c}ois Bachoc\\
 Institut de Math\'ematiques de Toulouse \& University Paul Sabatier \\
  \texttt{francois.bachoc@math.univ-toulouse.fr} \\
~ \\
  {\bf  Tommaso Cesari} \\
   Toulouse School of Economics \\
   \texttt{tommaso-renato.cesari@univ-toulouse.fr} \\
~ \\
   {\bf S\'ebastien Gerchinovitz} \\
   IRT Saint Exup\'ery \& Institut de Math\'ematiques de Toulouse \\
   \texttt{sebastien.gerchinovitz@irt-saintexupery.com}
      \end{tabular}%
    \vskip 0.3in minus 0.1in
  }

\end{center}

\appendix

\section{Useful Results on Packing and Covering}
\label{s:packing-covering}

For the sake of completeness, we recall the definitions of packing and covering numbers, as well as some known useful results.

\begin{definition}
Fix any norm $\lno{\cdot}$.
For any non-empty and bounded subset $E$ of $\Rd$ and all $r>0$, 
\begin{itemize}
    \item the $r$-\emph{packing number} of $E$ is the largest cardinality of an $r$-packing of $E$, i.e.,
    \[
    	\cN(E,r)
    := 
    	\sup \bcb{
    	\textstyle{
    		k \in \Ns  :  \exists \bx_1, \ldots, \bx_k \in E, \min_{i\neq j} \lno{ \bx_i - \bx_j } > r
    	}
    	} \;;
    \]
    \item the $r$-\emph{covering number} of $E$ is the smallest cardinality of an $r$-covering of $E$, i.e.,
    \[
    	\cM(E,r)
    :=
    	\min \bcb{
    		k \in \Ns  :  \exists \bx_1,\ldots,\bx_k \in \Rd, \forall \bx \in E, \exists i\in [ k ], \lno{ \bx-\bx_i}\le r
    	} \;.
    \]
\end{itemize}
We also define $\cN(\varnothing,r)=\cM(\varnothing,r)=0$ for all $r>0$.
\end{definition}

Covering numbers and packing numbers are closely related. 
In particular, the following well-known inequalities hold---see, e.g., \citep[Lemmas~5.5~and~5.7, with permuted notation of $\cM$ and $\cN$]{Wainwright19-HighDimensionalStatistics}.\footnote{%
The definition of $r$-covering number of a subset $E$ of $\R^d$ implied by \citep[Definition 5.1]{Wainwright19-HighDimensionalStatistics} is slightly stronger than the one used in our paper, because elements $x_1, \ldots, x_N$ of $r$-covers belong to $E$ rather than just $\R^d$. Even if we do not need it for our analysis, Inequality~(\ref{eq:upper-convering}) holds also in this stronger sense.}
\begin{lemma}
\label{l:wainwright}
Fix any norm $\lno{\cdot}$.
For any bounded set $E\s\R^d$ and all $r>0$,
\begin{equation}
\label{eq:wainwright}
	\cN(E,2r)
\le
	\cM(E,r)
\le
	\cN(E,r)\;.
\end{equation}
Furthermore, for any $\delta>0$ and all $r>0$,
\begin{equation}
\label{eq:upper-convering}
	\cM\lrb{B_{\delta},r}
\le
	\lrb{1 + 2 \frac{\delta}{r} \I_{r<\delta}}^d\;.
\end{equation}
\end{lemma}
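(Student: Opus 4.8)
The plan is to establish the three inequalities separately by elementary arguments; everything here is standard, so the work is in getting the strict-versus-nonstrict inequalities and the case split right rather than in any deep idea.

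First I would prove the chain $\cN(E,2r)\le\cM(E,r)\le\cN(E,r)$. For the right-hand inequality I would use that a \emph{maximal} $r$-packing is automatically an $r$-cover: let $\bx_1,\dots,\bx_k$ realize the supremum $k=\cN(E,r)$ (finite, hence attained, since $E$ is bounded). By maximality no point of $E$ can be appended while keeping all pairwise distances $>r$, so every $\bx\in E$ satisfies $\lno{\bx-\bx_i}\le r$ for some $i$; thus $\{\bx_1,\dots,\bx_k\}$ is an $r$-cover and $\cM(E,r)\le k$. For the left-hand inequality I would use an injection argument: fix an optimal $r$-cover $\by_1,\dots,\by_m$ with $m=\cM(E,r)$ and any $2r$-packing $\bx_1,\dots,\bx_k$, and assign to each $\bx_j$ a center $\by_{i(j)}$ with $\lno{\bx_j-\by_{i(j)}}\le r$. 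If two distinct packing points shared a center, the triangle inequality would force $\lno{\bx_j-\bx_{j'}}\le 2r$, contradicting the packing property $>2r$; hence $j\mapsto i(j)$ is injective, $k\le m$, and $\cN(E,2r)\le\cM(E,r)$.

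For the ball bound I would split on the indicator. When $r\ge\delta$ the claim reads $\cM(B_\delta,r)\le1$, which holds since the single ball $B_r(\bzero)\supseteq B_\delta$ already covers $B_\delta$. When $r<\delta$ I would combine the right-hand inequality just proved, $\cM(B_\delta,r)\le\cN(B_\delta,r)$, with a volume comparison for the packing number. Given any $r$-packing $\bx_1,\dots,\bx_k\subset B_\delta$, the pairwise distances exceeding $r$ force the balls $B_{r/2}(\bx_i)$ to be pairwise disjoint, and each lies inside $B_{\delta+r/2}(\bzero)$. Comparing Lebesgue volumes and using the homogeneity $v_\rho=\rho^d v_1$ of norm balls yields $k\,(r/2)^d\le(\delta+r/2)^d$, i.e. $k\le(1+2\delta/r)^d$; taking the supremum over packings gives $\cN(B_\delta,r)\le(1+2\delta/r)^d$, which is exactly the bound with $\I_{r<\delta}=1$.

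I do not expect a genuine obstacle: the only points needing care are that the strict inequality $>r$ in the packing definition is what makes a maximal packing a cover and what makes the radius-$r/2$ balls genuinely disjoint, and that the indicator $\I_{r<\delta}$ forces the two-case analysis of the ball estimate. Both are bookkeeping rather than mathematical difficulties.
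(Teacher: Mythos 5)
Your proof is correct. Note that the paper does not prove this lemma at all: it cites Lemmas~5.5 and~5.7 of Wainwright's textbook, so there is no in-paper argument to compare against, and your write-up is a correct self-contained version of exactly the standard arguments behind those cited results (a maximal $r$-packing is an $r$-cover; an injection from any $2r$-packing into an optimal $r$-cover via the triangle inequality; a disjoint-balls volume comparison with the homogeneity $v_\rho = \rho^d v_1$). Two small remarks. First, your covers in the first part consist of packing points, which lie in $E$; hence your proof in fact establishes the stronger form of the covering bound (centers required to lie in $E$) mentioned in the paper's footnote, not just the weaker definition with centers anywhere in $\R^d$. Second, the one step you assert rather than prove is the finiteness of $\cN(E,r)$ for bounded $E$ (needed so that the supremum is attained and the maximality argument applies); this follows from the same volume comparison you already use for the ball bound, applied to any ball containing $E$, so it is indeed only bookkeeping.
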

We now state a known lemma about packing numbers at different scales. 
This is the go-to result for rescaling packing numbers. 

\begin{lemma} \label{lem:changing radius}
Fix any norm $\lno{\cdot}$. 
For any bounded $E \subset \Rd$ and all $0 < r_1 < r_2 < \infty$, we have
\[
\cN \lrb{ E, r_1 }
\leq  
\left( 4 \frac{r_2}{r_1} \right)^d
\cN \lrb{ E, r_2 } \;.
\]
\end{lemma}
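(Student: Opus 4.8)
The plan is to bound the fine-scale packing number $\cN(E,r_1)$ by grouping an optimal $r_1$-packing into the cells induced by a \emph{maximal} $r_2$-packing, and then counting, via a volume comparison, how many $r_1$-separated points can fall into a single cell. First I would fix an $r_1$-packing $\bx_1,\ldots,\bx_N$ of $E$ achieving $N=\cN(E,r_1)$, together with a maximal $r_2$-packing $\by_1,\ldots,\by_M$ of $E$ achieving $M=\cN(E,r_2)$. The key observation is that maximality forces the latter to be an $r_2$-cover of $E$: every point of $E$ lies within distance $r_2$ of some $\by_j$, since otherwise that point could be appended to the packing, contradicting maximality. Hence each $\bx_i$ can be assigned to some $\by_{j(i)}$ with $\lno{\bx_i-\by_{j(i)}}\le r_2$, which partitions $\{1,\ldots,N\}$ into $M$ cells.

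Next I would bound the number $N_j$ of packing points assigned to a fixed $\by_j$. Since these points are pairwise more than $r_1$ apart, the balls $B_{r_1/2}(\bx_i)$ have pairwise disjoint interiors; and since $\lno{\bx_i-\by_j}\le r_2$, each such ball is contained in $B_{r_2+r_1/2}(\by_j)$. Comparing \leb{} volumes and using the scaling $v_\rho=\rho^d v_1$ then gives
\[
    N_j\, v_{r_1/2}\le v_{r_2+r_1/2}\,,
\]
so that $N_j\le\brb{(2r_2+r_1)/r_1}^d$. Summing over the $M$ cells yields $N\le M\,\brb{(2r_2+r_1)/r_1}^d$, and the hypothesis $r_1<r_2$ lets me crudely bound $2r_2+r_1<3r_2\le 4r_2$, which delivers the claimed factor $\brb{4r_2/r_1}^d$ with room to spare.

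There is no serious obstacle here, as the statement is a routine rescaling fact; the only points requiring care are (i) invoking maximality of the $r_2$-packing to obtain the $r_2$-cover, and (ii) the radius bookkeeping guaranteeing the enclosing ball has radius $r_2+r_1/2$ and that the disjoint-interior volume inequality goes through even though the $B_\rho(\cdot)$ are closed (their pairwise overlaps have zero volume). As an alternative that stays entirely within the stated packing/covering inequalities, one can instead chain $\cN(E,r_1)\le \cM(E,r_1/2)$ from \eqref{eq:wainwright}, cover $E$ by the $M$ balls $B_{r_2}(\by_j)$, and cover each of those at scale $r_1/2$ via \eqref{eq:upper-convering}; this purely combinatorial route avoids any volume computation at the mild cost of a slightly larger constant.
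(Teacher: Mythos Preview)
Your proposal is correct and follows essentially the same approach as the paper: both arguments group the fine $r_1$-packing into cells of $r_2$-scale and bound the cell sizes via the identical volume comparison $N_j\,v_{r_1/2}\le v_{\rho}$ for an enclosing ball. The only cosmetic difference is that the paper builds the $r_2$-packing greedily from the $r_1$-packing itself (removing points within distance $r_2$ at each step, with enclosing radius taken as $2r_2$), whereas you take an external maximum $r_2$-packing of $E$ and use its maximality to obtain the $r_2$-cover; your enclosing radius $r_2+r_1/2$ even yields the sharper constant $\brb{(2r_2+r_1)/r_1}^d\le(3r_2/r_1)^d$ before you relax it to $(4r_2/r_1)^d$.
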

\begin{proof}
Fix any bounded $E\s \Rd$ and $0 < r_1 < r_2 < \iop$.
Consider an $r_1$-packing $F = \{\bx_1,\ldots,\bx_{N_1}\}$ of $E$ with cardinality $N_1 = \cN \lrb{ E, r_1 }$. Consider then the following iterative procedure. Let $F_0 = F$ and initialize $k=1$. While $F_{k-1}$ is non-empty, let $\by_k$ be any point in $F_{k-1}$, let $F_k$ be obtained from $F_{k-1}$ by removing the points at $\lno \cdot$-distance less or equal to $r_2$ from $\by_k$ (including $\by_k$ itself), and increase $k$ by one. Then this procedure yields an $r_2$-packing of $E$ with cardinality equal to the number of steps (the final value $k_{\mathrm{fin}}$ of $k$). 
At each step $k$, the balls with radius $r_1/2$ centered at points that are removed at this step are included in the ball with radius $2 r_2$ centered at $\by_k$. 
By a volume argument, then, the number of removed points at each step is smaller than or equal to $v_{2 r_2}/v_{r_1/2} =(4 r_2 / r_1)^d $. 
Hence the total number of steps $k_{\mathrm{fin}}$ is greater than or equal to $\cN \lrb{ E, r_1 }  (r_1 / 4 r_2)^d$.  This concludes the proof since $ \cN \lrb{ E, r_2 }$ is greater than or equal to the total number of steps $k_{\mathrm{fin}}$.
\end{proof}

\section{Missing Proofs of Section~\ref{s:improvedDOO}}
\label{s:missing-upper-DOO}

In \cref{s:improvedDOO}, we introduced a certified version of the DOO algorithm. 
To prove \cref{prop:DOO:certified}, we adapt and slightly improve (see \cref{rem:doo:suboptimal:certified}) some of the arguments in \cite{munos2011optimistic}, showing that the sample complexity of $\cdoo{}$ is upper bounded (up to constants) by $S_\mathrm{C}(f,\e)$, defined in \cref{eq:certified:bound}.

\paragraph{Proof of \cref{prop:DOO:certified}.}
Recall that $f$ is an $L$-\lip{} function with a global maximizer $\bxs$.

Let us first show that Algorithm \ref{alg:DOO} is indeed a certified algorithm, that is, $f(\bx^\star )-f(\bx_n) \le \xi_n$ for all $n \geq 1$. Note that $f(\bx^\star )-f(\bx_1) \le LR = \xi_1$, since $f$ is $L$-\lip{} and $R$ bounds the diameter of $X_{0,0}\supset \cX$.
So, take any $n \ge 2$. 
Consider the state of the algorithm after exactly $n$ evaluations of $f$. Let $(h^\star,i^\star)$ correspond to the last time that \cref{eq:DOO:select:cell} was reached and let $m$ be the total number of evaluations of $f$ made up to that time ($m\le n$). Then the error certificate is $\xi_n = f(\bx_{h^\star,i^\star}) +  L R \delta^{h^\star} - 
f(\bxs_n)$. By induction, it is straightforward to show that the union of the cells in $\mathcal{L}_j$ contains $\cX$ at all steps $j \in \mathbb{N}^\star$. Therefore, the global maximizer $\bx^\star$ belongs to a cell $X_{\bar{h},\bar{i}}$ with $(\bar{h},\bar{i}) \in \mathcal{L}_m$. We have, using first \cref{eq:DOO:select:cell} and then Assumption \ref{assumption:DOO:small:cells} and that $f$ is $L$-\lip{},
\begin{align}
    f(\bx_{h^\star,i^\star}) + L R \delta^{h^\star}
    & \geq f(\bx_{\bar{h},\bar{i}}) + L R \delta^{\bar{h}} \nonumber
    \\
    & \geq f(\bx^\star) - L R \delta^{\bar{h}} + L R \delta^{\bar{h}} \nonumber
    \\
    & = f(\bx^\star) \;.\label{eq:DOO:selected:points:goodbis}
\end{align}
This shows $\xi_n \geq f(\bx^\star) - 
f(\bxs_n)$.
Hence Algorithm \ref{alg:DOO} is a certified algorithm.

We now show the upper bound on $\sigma(\cdoo,f,\e)$.
Consider the infinite sequence $((h^\star_\ell,i^\star_\ell))_{\ell \in \Ns}$ of the leaves that are successively selected at \cref{eq:DOO:select:cell} of Algorithm \ref{alg:DOO}.
For any leaf $(h,i) \in ((h^\star_\ell,i^\star_\ell))_{\ell \in \Ns}$, let $N_{h,i}$ be the number of evaluations of $f$ made by Algorithm \ref{alg:DOO} until the leaf $(h,i)$ is selected at \cref{eq:DOO:select:cell}.  
Define then the stopping time
\[
I_{\e} = \inf \left\{ \ell \in \Ns ;  
f( \bx_{h^\star_\ell,i^\star_\ell}  )
+ L R \delta^{h^\star_\ell} 
\leq 
\underset{i \in [N_{h^\star_\ell,i^\star_\ell}]}{\max} f(\bx_i) 
+ \e 
\right\} \;
\]
which corresponds to the first iteration when the event
\begin{equation} \label{eq:the:event}
f( \bx_{h^\star,i^\star}  )
+ L R \delta^{h^\star} 
\leq 
\underset{i \in [N_{h^\star,i^\star}]}{\max} f(\bx_i) 
+ \e
\end{equation}
holds at \cref{eq:DOO:select:cell}. 
Consider the $N$-th evaluation of $f$ with $N = N_{h^\star_{I_{\e}},i^\star_{I_{\e}}} + 1$, that is, the first evaluation of $f$ after the event \eqref{eq:the:event} holds for the first time. Then we have, from \eqref{eq:the:event} with $(h^\star , i^\star) = (h^\star_{I_\e},i^\star_{I_\e})$,
\[
f( \bx_{h^\star,i^\star} ) + L R \delta^{h^\star} 
\leq 
\underset{i \in [N_{h^\star,i^\star}]}{\max} f(\bx_i) 
+\e 
\leq
\underset{i \in [N]}{\max} f(\bx_i) 
+\e, 
\]
and thus $\xi_N \leq \e$. 
Since by definition $\sigma(\cdoo,f,\e) = \min \{n \in \N^*: \xi_n \leq \e\}$, we have
\begin{equation} \label{eq:DOO:certified:tau:smaller}
\sigma(\cdoo,f,\e)
\leq N = 
N_{h^\star_{I_{\e}},i^\star_{I_{\e}}} +1
\leq 2 + K (I_\e-1) \;. 
\end{equation}

We now bound $I_\e-1$ from above. 
Assume without loss of generality that $I_\e - 1 \ge 1$ and
consider the sequence $(h^\star_1,i^\star_1), \ldots , (h^\star_{I_\e-1},i^\star_{I_\e-1})$ corresponding to the first $I_{\e}-1$ times the DOO algorithm went through \cref{eq:DOO:select:cell}. Let $\mathcal{E}_{\e}$ be the corresponding finite set $\{ \bx_{h^\star_1,i^\star_1} , \ldots , \bx_{h^\star_{I_\e-1},i^\star_{I_\e-1}} \}$. 
Recall that $\e_{\me} := \e$ and $\e_i := \e_0 2^{-i}$ for $i < \me $, with $\eo := L \max_{\bx,\by \in \cX} \lno{\bx-\by}$.
Recall also that $\cX_\e := \{\bx \in \cX: \max(f) - f(\bx) \le \e\}$ and for all $0\le a<b$, 
$
    \cX_{(a,b]}
:=
	\bcb{ \bx \in  \cX   :  a < f(\bxs)-f(\bx)\le b}
$.
Since any $\bx \in \cX$ is $\eo$-optimal, then it either belongs to $\cX_\e$ or one of the layers $\cX_{(\e_i, \e_{i-1}]}$.
Thus we have $\mathcal{E}_{\e}
\subset
\cX_{ \e}
\bigcup
\left(
\bigcup_{i=1}^{\me} 
\cX_{\left(\e_i, \ \e_{i-1} \right]}
\right)$, so that
\begin{equation} \label{eq:DOO:certified:E:in:cup}
I_\e -1 = \card(\mathcal{E}_{\e})
\leq \card \left(\mathcal{E}_{\e} \cap \cX_{ \e}\right)
+ \sum_{i=1}^{\me} 
\card \left(\mathcal{E}_{\e} \cap \cX_{\left(\e_i, \ \e_{i-1} \right]}\right) \;.
\end{equation}
Let $N_{\e,m_\e+1}$ be the cardinality of $\mathcal{E}_{\e} \cap \cX_{ \e}$. For $i=1,\ldots,m_\e$, let $N_{\e,i}$ be the cardinality of $\mathcal{E}_{\e} \cap \cX_{\left(\e_i, \ \e_{i-1} \right]}$.

Note that the arguments leading to \eqref{eq:DOO:selected:points:goodbis} 
imply that for $\bx_{h,j} \in \mathcal{E}_{\e}$, 
\begin{equation} \label{eq:DOO:selected:points:good}
    \bx_{h,j} \in \cX_{ L R \delta^{h} }.
\end{equation}
Consider two distinct $\bx_{h,j} , \bx_{h',j'} \in \mathcal{E}_{\e} \cap \cX_{\left(\e_i,
 \ \e_{i-1} \right]}$. Then, from Assumption \ref{assumption:DOO:well-separated} and \eqref{eq:DOO:selected:points:good}, we obtain
\[
|| \bx_{h,j} - \bx_{h',j'} ||
\geq
\nu \delta^{ \max( h , h' ) }
> 
\frac{\nu \e_i}{L R   } \;.
\]
Hence, by definition of packing numbers, we have
\[
N_{\e,i} 
\leq 
\mathcal{N}
\left(
\cX_{\left(\e_i,
 \ \e_{i-1} \right]}
,
\frac{\nu \e_i}{L R   }
\right).
\]
Using now Lemma \ref{lem:changing radius} (Section~\ref{s:packing-covering}), we obtain
\begin{equation} \label{eq:DOO:non:certified:Nei:smaller}
N_{\e,i} 
\leq 
\left(
\mathbf{1}_{ \nu/ R \geq 1 }
+
\mathbf{1}_{ \nu/ R < 1 }
\left(
\frac{4 R}{\nu}
\right)^d
\right)
\cN
\left(
\cX_{\left(\e_i,
 \ \e_{i-1} \right]}
,
\frac{ \e_i }{ L    }
\right).
\end{equation}

Let now $\bx_{h^\star_\ell,i^\star_\ell} \in \mathcal{E}_{\e} \cap \cX_{\e}$, with $\ell \in \{1 , \ldots, I_\e-1 \} $. The leaf $(h^\star_\ell,i^\star_\ell)$ was selected when the algorithm went through \cref{eq:DOO:select:cell} for the $\ell$-th time. By definition of $I_\e$, the event \eqref{eq:the:event} does not hold when $(h^\star , i^\star) = (h^\star_\ell,i^\star_\ell)$ and thus
\[
f(\bx_{h^\star_{\ell},i^\star_{\ell}}) + L R \delta^{h^\star_{\ell}} 
> 
\underset{i \in [N_{h^\star_{\ell} , i^\star_{\ell}}]}{\max} f( \bx_i )
+ \e
\geq 
f(\bx_{h^\star_{\ell},i^\star_{\ell}}) + \e \;.
\]
This implies that $L R \delta^{h^\star_{\ell}} > \e$ and thus
\[
 \delta^{h^\star_{\ell}} > 
\frac{ \e }{L R} \;.
\]
Now consider two distinct $\bx_{h,j} , \bx_{h',j'} \in \mathcal{E}_{\e} \cap \cX_{\e}$. Then, from Assumption \ref{assumption:DOO:well-separated}, we obtain
\[
|| \bx_{h,j} - \bx_{h',j'} ||
\geq 
\nu \delta^{ \max( h , h' ) }
>
\frac{\nu \e}{L R } \;.
\]
Hence, we have
\[
N_{\e,m_\e+1} 
\leq 
\mathcal{N}
\left(
\cX_{\e}
,
\frac{\nu \e}{L R }
\right).
\]
Using now Lemma \ref{lem:changing radius} (Section~\ref{s:packing-covering}), we obtain
\begin{equation*} 
N_{\e,m_\e+1} 
\leq 
\left(
\mathbf{1}_{ \nu/ R \geq 1 }
+
\mathbf{1}_{ \nu/ R < 1 }
\left(
\frac{4 R}{\nu}
\right)^d
\right)
\cN
\left(
\cX_{\e}
,
\frac{ \e }{ L    }
\right).
\end{equation*}
Combining \eqref{eq:DOO:certified:tau:smaller} and \eqref{eq:DOO:certified:E:in:cup} with \eqref{eq:DOO:non:certified:Nei:smaller} 
and the last inequality concludes the proof. \hfill \qedsymbol{}

\begin{remark} \label{rem:doo:suboptimal:certified}
The analysis of the DOO algorithm in \citet[Theorem 1]{munos2011optimistic} does not address the certified setting. The previous proof adapts this analysis to the certified setting and, in passing, slightly improves some of the arguments. Indeed, when counting the cell representatives that are selected, \citet[Theorem~1]{munos2011optimistic} partitions the domain $\cX$ at any depth $h$ of the tree, yielding bounds involving packing numbers of the form $\cN \lrb{ \cX_{\e_{k-1}}, \, \frac{\e_k}{L} }$, $k = 1,\ldots,\me$. In contrast we partition the values of $f$, yielding bounds involving the smaller packing numbers $\cN \lrb{ \cX_{(\e_k, \e_{k-1}]}, \, \frac{\e_k}{L} }$, $k = 1,\ldots,\me$ (and $\cN \lrb{ \cX_{\e}, \frac{\e}{L} }$ that is specific to the certified setting). This improvement also enables us to slightly refine the bound of \citet[Theorem 1]{munos2011optimistic} in the non-certified setting, see Remark \ref{rem:doo:suboptimal:non:certified} in Section~\ref{s:comparison-non-certif}. We also refer to this remark for more details on the two partitions just discussed.
\end{remark}

\begin{remark}
\label{rem:generalparition}
The bound of \cref{prop:DOO:certified}, based on \cref{eq:certified:bound}, is built by partitioning $[0,\e_0]$ into the $\me+1$ sets $[0, \e],(\epsilon,\epsilon_{m_{\e}-1}],(\e_{m_{\e}-1},\e_{m_\e-2}],\ldots,(\e_1,\e_0]$ whose lengths are sequentially doubled (except from
$[0, \e]$ to $(\epsilon,\epsilon_{m_{\e}-1}]$ and from
$(\e,\e_{m_\e-1}]$ to $(\e_{m_{\e}-1},\e_{m_\e-2}]$). As can be seen from the proof of \cref{prop:DOO:certified}, more general bounds could be obtained, based on more general partitions of  $[0,\e_0]$. The benefits of the present partition are the following. First, except for $[0, \e]$, it considers sets whose upper values are no more than twice the lower values, which controls the magnitude of their corresponding packing numbers in \cref{eq:certified:bound}, at scale the lower values. Second, the number of sets in the partition is logarithmic in $1/\e$ which controls the sum in \cref{eq:certified:bound}. 
Finally, the upper bound is then tight up to a logarithmic factor for functions $f \in \cF_L$, as proved in Section \ref{section:lower:bounds}. 
Note also that the same generalization could be applied in the non-certified setting, see Section~\ref{s:comparison-non-certif}.
\end{remark}

\section{Missing Proofs of Section~\ref{s:hansen}}
\label{s:missing-characterization}

We now prove a result on the sum of volumes of overlapping layers that is used in the proof of \cref{t:hansen}.
\begin{lemma}
\label{l:overlap}
If $f$ is $L$-\lip{}, 
fix $\e\in(0,\eo]$ and recall that $m_{\e}:=\bce{\log_{2}(\e_{0}/\e)}$,
$\e_{m_{\e}}:=\e$, and for all $k\le m_{\e}-1$, $\e_{k}=:\e_{0}2^{-k}$.
Then
\[
\frac{\vol\brb{\cX_{2\e}}}{\e^{d}}+\sum_{k=1}^{m}\frac{\vol\lrb{\cX_{\left(\frac{1}{2}\e_{k},\ 2\e_{k-1}\right]}}}{\e_{k-1}^{d}}\le8^{d}\lrb{\frac{\vol(\cX_{\e})}{\e^{d}}+\sum_{i=1}^{m}\frac{\vol\brb{\cX_{(\e_{k},\ \e_{k-1}]}}}{\e_{k-1}^{d}}}\;.
\]
\end{lemma}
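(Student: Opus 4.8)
The plan is to reduce the stated inequality to a single pointwise estimate on the integrand. Writing $u(\bx):=f(\bxs)-f(\bx)\in[0,\eo]$ for the suboptimality gap and using $\vol(\cX_{(a,b]})=\int_\cX\I\{a<u(\bx)\le b\}\dif\bx$ together with $\vol(\cX_b)=\int_\cX\I\{u(\bx)\le b\}\dif\bx$, I rewrite both sides as integrals over $\cX$ of weight functions of $u(\bx)$. Setting
\[
W_{\mathrm o}(t):=\frac{\I\{t\le 2\e\}}{\e^d}+\sum_{k=1}^{\me}\frac{\I\{\tfrac12\e_k<t\le 2\e_{k-1}\}}{\e_{k-1}^d},\qquad W_{\mathrm d}(t):=\frac{\I\{t\le\e\}}{\e^d}+\sum_{k=1}^{\me}\frac{\I\{\e_k<t\le\e_{k-1}\}}{\e_{k-1}^d},
\]
the left-hand side equals $\int_\cX W_{\mathrm o}(u(\bx))\dif\bx$ and the right-hand side equals $8^d\int_\cX W_{\mathrm d}(u(\bx))\dif\bx$. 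It therefore suffices to prove the pointwise bound $W_{\mathrm o}(t)\le 8^d\,W_{\mathrm d}(t)$ for every $t\in(0,\eo]$. The key simplification is that the intervals $(0,\e]$ and $(\e_k,\e_{k-1}]$ partition $(0,\eo]$, so $W_{\mathrm d}(t)$ reduces to the single value $\e_{j-1}^{-d}$ (or $\e^{-d}$) indexed by the layer containing $t$.

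The combinatorial core is a window observation: for fixed $t$ at most three of the dilated intervals $(\tfrac12\e_k,2\e_{k-1}]$ contain $t$. Indeed, when $t>2\e$ all relevant scales are genuinely geometric ($\e_i=\eo2^{-i}$ with $i\le\me-1$), so if $t\in(\e_j,\e_{j-1}]$ then $\tfrac12\e_k<t\le 2\e_{k-1}$ forces $k\in\{j-1,j,j+1\}$, while the extra term $\I\{t\le 2\e\}/\e^d$ vanishes. Reading off the three contributing scales $\e_{j-2},\e_{j-1},\e_j$ and using $\e_{j-2}=2\e_{j-1}$, $\e_j=\tfrac12\e_{j-1}$ gives
\[
W_{\mathrm o}(t)\le \e_{j-2}^{-d}+\e_{j-1}^{-d}+\e_j^{-d}=\bigl(2^{-d}+1+2^d\bigr)\e_{j-1}^{-d}=\bigl(2^{-d}+1+2^d\bigr)W_{\mathrm d}(t),
\]
where boundary truncation at $k=1,\me$ only deletes terms. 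Since $2^{-d}+1+2^d\le 3\cdot 2^d\le 8^d$, this settles every $t>2\e$.

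It remains to treat the boundary regime $t\le 2\e$, where the extra term $\e^{-d}$ is active and the smallest scale $\e_{\me}=\e$ is not exactly geometric. Here I will exploit that the ceiling in $\me=\bce{\log_2(\eo/\e)}$ forces $\e_{\me-1}\in(\e,2\e]$ and $\e_{\me-2}\in(2\e,4\e]$. Splitting into $t\le\e$, $\e<t\le\e_{\me-1}$, and $\e_{\me-1}<t\le 2\e$, in each case the active scales $\e_{k-1}$ form a geometric tail bounded below by the smallest one, so their reciprocals sum to at most twice the largest summand, and the lone extra term $\e^{-d}$ is compared to $W_{\mathrm d}(t)$ through the ratios above. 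The tightest case is $\e_{\me-1}<t\le 2\e$, where $W_{\mathrm d}(t)=\e_{\me-2}^{-d}$: the dilated sum contributes $(2^{-d}+1+2^d)\e_{\me-2}^{-d}\le 4^d\e_{\me-2}^{-d}$ and the extra term contributes $\e^{-d}=(\e_{\me-2}/\e)^d\,\e_{\me-2}^{-d}\le 4^d\e_{\me-2}^{-d}$, so $W_{\mathrm o}(t)\le 2\cdot 4^d\,W_{\mathrm d}(t)\le 8^d\,W_{\mathrm d}(t)$.

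The main obstacle is exactly this boundary regime. A crude count---three active dilated intervals, each inflated by at most $2^d$---would only give $3\cdot 4^d$, which exceeds $8^d$ at $d=1$; the clean constant $8^d$ requires both the sharper geometric-sum estimate $2^{-d}+1+2^d\le 4^d$ and the sharp sandwiches $\e_{\me-1}\in(\e,2\e]$, $\e_{\me-2}\in(2\e,4\e]$ produced by the ceiling in $\me$, which pin down the single extra term $\e^{-d}$ against $W_{\mathrm d}(t)$. The reverse inequality (the constant $c$) is immediate, since $(\e_k,\e_{k-1}]\subseteq(\tfrac12\e_k,2\e_{k-1}]$ and $\cX_\e\subseteq\cX_{2\e}$ make the left-hand side dominate the disjoint sum term by term.
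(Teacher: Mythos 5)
Your proof is correct. I checked the pointwise inequality $W_{\mathrm{o}}(t)\le 8^d\,W_{\mathrm{d}}(t)$ in all four regimes ($t>2\e$; $t\le\e$; $\e<t\le\e_{\me-1}$; $\e_{\me-1}<t\le 2\e$), including the slightly delicate role of the non-geometric last scale $\e_{\me}=\e$, and the restriction to $t\in[0,\eo]$ --- legitimate because the gap $f(\bxs)-f(\bx)$ never exceeds $\eo$ --- is genuinely needed (it is what saves the degenerate case $\me=0$, where $W_{\mathrm{d}}$ vanishes on $(\e,2\e]$ but no such gap value exists). Your route is essentially the Fubini dual of the paper's. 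The paper never passes to integrands: using $\tfrac12\e_k=\e_{k+1}$ and $2\e_{k-1}=\e_{k-2}$, it replaces each overlapping layer by a union of three or four consecutive disjoint layers, e.g.
\[
\cX_{\left(\frac{1}{2}\e_{k},\ 2\e_{k-1}\right]}\subseteq\cX_{(\e_{k+1},\ \e_{k}]}\cup\cX_{(\e_{k},\ \e_{k-1}]}\cup\cX_{(\e_{k-1},\ \e_{k-2}]}\qquad(k\le\me-2)\;,
\]
then regroups the resulting double sum by disjoint layer and bounds the coefficient multiplying each volume, finishing with the same arithmetic you use ($\max(3,1,4^{d})=4^{d}$ and $4^{d}+2^{-d}+1+2^{d}\le 8^{d}$). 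Fixing $t$ and counting which dilated intervals contain it, as you do, is exactly the transposed count: fixing a disjoint layer and asking in which overlapping terms it appears. So the combinatorial core is shared --- at most three dilated intervals per point, the geometric relations between consecutive scales, and the sandwiches $\e<\e_{\me-1}\le 2\e$ and $2\e<\e_{\me-2}\le 4\e$ forced by the ceiling. What your organization buys: the whole lemma collapses to one scalar inequality between step functions, which cleanly isolates the only tight spot (your last regime, where the stray term $\e^{-d}$ and three interval terms are simultaneously active, giving $2\cdot 4^d$). What the paper's organization buys: no integral representation of volumes is needed, and the argument reads as a direct chain of set inclusions and volume comparisons. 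Two cosmetic points: state the pointwise bound on $[0,\eo]$ rather than $(0,\eo]$, so that maximizers ($t=0$) are covered --- trivially, since $W_{\mathrm{o}}(0)=W_{\mathrm{d}}(0)=\e^{-d}$ --- and note that your boundary subcases implicitly assume $\me\ge 2$; for $\me\in\{0,1\}$ the missing scales simply delete terms, exactly as in your truncation remark for the generic regime.
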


\begin{proof}
To avoid clutter, we denote $m_{\e}$ simply by $m$. \textcolor{black}{Assume first that $m\ge 3$. Then, t}he left hand
side can be upper bounded by
\begin{align*}
& 
    \frac{\vol\brb{\cX_{\e}}+\vol\brb{\cX_{(\e_{m}, \ \e_{m-1}]}}+\vol\brb{\cX_{(\e_{m-1}, \ \e_{m-2}]}}}{\e^{d}}
\\
&
\hspace{51.68187pt}
\qquad
+
    \sum_{k=1}^{m-2}\frac{\vol\lrb{\cX_{\left(\e_{k+1},\ \e_{k}\right]}}+\vol\lrb{\cX_{\left(\e_{k},\ \e_{k-1}\right]}}\vol\lrb{\cX_{\left(\e_{k-1},\ \e_{k-2}\right]}}}{\e_{k-1}^{d}}
\\
& 
\hspace{51.68187pt}
\qquad
+
    \frac{\vol\lrb{\cX_{\e}}+\vol\brb{\cX_{(\e_{m}, \ \e_{m-1}]}}+\vol\brb{\cX_{(\e_{m-1}, \ \e_{m-2}]}}+\vol\brb{\cX_{(\e_{m-2}, \ \e_{m-3}]}}}{\e_{m-2}^{d}}
\\
&
\hspace{51.68187pt}
\qquad
+
    \frac{\vol\lrb{\cX_{\e}}+\vol\brb{\cX_{(\e_{m}, \ \e_{m-1}]}}+\vol\brb{\cX_{(\e_{m-1}, \ \e_{m-2}]}}}{\e_{m-1}^{d}}
\\
& 
\qquad
\le
    3\frac{\vol\brb{\cX_{\e}}}{\e^{d}}+(2^{d}+2)\frac{\vol\brb{\cX_{(\e_{m}, \ \e_{m-1}]}}}{\e_{m-1}^{d}}+(4^{d}+2^{d}+1)\frac{\vol\brb{\cX_{(\e_{m-1}, \ \e_{m-2}]}}}{\e_{m-2}^{d}}
\\
& 
\hspace{51.68187pt}
\qquad
+
\frac{1}{2^{d}}\sum_{k=2}^{m-1}\frac{\vol\lrb{\cX_{\left(\e_{k},\ \e_{k-1}\right]}}}{\e_{k-1}^{d}}+\sum_{k=1}^{m-2}\frac{\vol\lrb{\cX_{\left(\e_{k},\ \e_{k-1}\right]}}}{\e_{k-1}^{d}}+2^{d}\sum_{k=1}^{m-3}\frac{\vol\lrb{\cX_{\left(\e_{k},\ \e_{k-1}\right]}}}{\e_{k-1}^{d}}
\\
& 
\qquad
=
    3\frac{\vol\brb{\cX_{\e}}}{\e^{d}}+\frac{\vol\brb{\cX_{(\e_{m}, \ \e_{m-1}]}}}{\e_{m-1}^{d}}+4^{d}\frac{\vol\brb{\cX_{(\e_{m-1}, \ \e_{m-2}]}}}{\e_{m-2}^{d}}
\\
&
\hspace{51.68187pt}
\qquad
+
    \frac{1}{2^{d}}\sum_{k=2}^{m-1}\frac{\vol\lrb{\cX_{\left(\e_{k},\ \e_{k-1}\right]}}}{\e_{k-1}^{d}}+\sum_{k=1}^{m}\frac{\vol\lrb{\cX_{\left(\e_{k},\ \e_{k-1}\right]}}}{\e_{k-1}^{d}}+2^{d}\sum_{k=1}^{m}\frac{\vol\lrb{\cX_{\left(\e_{k},\ \e_{k-1}\right]}}}{\e_{k-1}^{d}}
\end{align*}
where we applied several times the definition of the $\e_{k}$'s, the
inequality follows by $\nicefrac{1}{\e^{d}}+\nicefrac{1}{\e_{m-1}^{d}}+\nicefrac{1}{\e_{m-2}^{d}}\le\min\bcb{3(\nicefrac{1}{\e^{d}}),\ (2^{d}+2)(\nicefrac{1}{\e_{m-1}^{d}}),\ (4^{d}+2^{d}+1)(\nicefrac{1}{\e_{m-2}^{d}})}$,
and the \textcolor{black}{bound follows after} observing that $\max(3,1,4^{d})=4^{d}$ and $4^{d} + \nicefrac{1}{2^{d}}+1+2^{d}\le8^{d}$.
\textcolor{black}{The simple cases $m=1$ and $m=2$ can be treated similarly.}
\end{proof}

We denote by $A+B$ the \mink{} sum of two sets $A,B$ and
for any set $A$ and all $\lambda \in \R$, we let $\lambda A := \{ \lambda \ba : \ba \in A \}$.

\begin{proposition}
\label{p:pack-vol}
If $f$ is $L$-\lip{} and $\cX$ satisfies Assumption~\ref{ass:geomCx} with $r_0>0,\gamma\in(0,1]$, then, for all $0<w<u< 2L r_0$,
\[
    \cN\lrb{\cX_u, \ \frac{u}{L}}
\le
    \frac{1}{\gamma}
    \frac{\vol\brb{ \cX_{(\nicefrac{3}{2})u} } }{ \vol\lrb{ \frac{u}{2L} B_1 } }
\qquad \text{and} \qquad
    \cN\lrb{\cX_{(w, u]}, \ \frac{w}{L}}
\le
    \frac{1}{\gamma}
    \frac{\vol\brb{ \cX_{ ( \nicefrac{w}{2}, \nicefrac{3u}{2} ]} } }{ \vol\lrb{ \frac{w}{2L} B_1 } } \;.
\]
\end{proposition}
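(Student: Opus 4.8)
The plan is to run a classical volume-comparison argument for packings, fortified with the geometric \cref{ass:geomCx} and the \lip{}ness of $f$; both inequalities have exactly the same structure, so I would prove them in parallel. The guiding idea is: a packing of a sublevel/layer set gives pairwise disjoint half-radius balls, each of which (i) keeps a $\gamma$-fraction of its volume inside $\cX$ by \cref{ass:geomCx}, and (ii) lands entirely inside a slightly larger target set by \lip{}ness. Summing (i) over these disjoint sets, all contained in the target set, then bounds the packing number by a volume ratio.

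Concretely, for the first inequality I would fix a maximal $(\nicefrac{u}{L})$-packing $\bx_1,\ldots,\bx_N$ of $\cX_u$ with $N := \cN(\cX_u, \nicefrac{u}{L})$, and set $r := \nicefrac{u}{L}$. Since $\min_{i\neq j}\lno{\bx_i-\bx_j} > r$, the closed balls $B_{r/2}(\bx_i)$ are pairwise disjoint (a common point would force the centers within distance $r$ by the triangle inequality), hence so are the sets $B_{r/2}(\bx_i)\cap\cX$. Because $r/2 = \nicefrac{u}{2L} < r_0$ --- which is exactly where the hypothesis $u < 2Lr_0$ enters --- \cref{ass:geomCx} applies at each $\bx_i\in\cX$ and gives $\vol\brb{B_{r/2}(\bx_i)\cap\cX} \ge \gamma v_{r/2}$. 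For the inclusion, any $\by\in B_{r/2}(\bx_i)\cap\cX$ satisfies, using $\bx_i\in\cX_u$ and the $L$-\lip{}ness of $f$,
\[
f(\bxs)-f(\by) = \brb{f(\bxs)-f(\bx_i)} + \brb{f(\bx_i)-f(\by)} \le u + L\cdot\frac{u}{2L} = \frac{3u}{2},
\]
so $\by\in\cX_{(\nicefrac{3}{2})u}$. Summing the $N$ disjoint lower bounds gives $N\gamma v_{r/2}\le \vol\brb{\cX_{(\nicefrac{3}{2})u}}$, which rearranges to the claim since $v_{r/2}=\vol\brb{\tfrac{u}{2L}B_1}$.

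For the second inequality I would repeat the argument verbatim with $r := \nicefrac{w}{L}$ and a maximal packing of $\cX_{(w,u]}$; here $r/2 = \nicefrac{w}{2L} < r_0$ follows from $w < u < 2Lr_0$. The only delicate point is the \emph{two-sided} membership in the layer $\cX_{(\nicefrac{w}{2},\nicefrac{3u}{2}]}$: the upper bound $f(\bxs)-f(\by)\le u+\nicefrac{w}{2}\le\nicefrac{3u}{2}$ goes through as before using $w\le u$, while for the lower bound I would crucially use that $\bx_i\in\cX_{(w,u]}$ gives the \emph{strict} inequality $f(\bxs)-f(\bx_i) > w$, so that $f(\bxs)-f(\by) > w - L\cdot\nicefrac{w}{2L} = \nicefrac{w}{2}$ strictly, matching the half-open definition of the layer. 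I expect the main obstacle to be precisely this bookkeeping --- aligning the \lip{} estimates with the half-open thresholds of $\cX_{(\nicefrac{3}{2})u}$ and $\cX_{(\nicefrac{w}{2},\nicefrac{3u}{2}]}$ (especially keeping the lower inequality strict) and checking that the radius $r/2$ stays below $r_0$ so that \cref{ass:geomCx} is legitimately invoked.
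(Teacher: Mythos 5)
Your proposal is correct and follows essentially the same argument as the paper's proof: disjoint half-radius balls around the packing points, Assumption~\ref{ass:geomCx} to keep a $\gamma$-fraction of each ball's volume inside $\cX$, and \lip{}ness to place each ball (intersected with $\cX$) inside the enlarged set $\cX_{(\nicefrac{3}{2})u}$ or layer $\cX_{(\nicefrac{w}{2},\nicefrac{3u}{2}]}$ before summing volumes. The only cosmetic difference is that the paper packages the union of balls via a \mink{} sum $\brb{\cX_u \cap E_i + B_{\nicefrac{\eta_i}{2}}} \cap \cX$ rather than summing the disjoint volumes directly, and your handling of the strict lower inequality for the layer is, if anything, slightly more explicit than the paper's.
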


\begin{proof}
Fix any $u>w>0$. 
Let $\eta_1:=\frac{u}{L}$, $\eta_2:=\frac{w}{L}$, $E_1:=\cX$, $E_2:=\cX_w^c$, and $i\in[2]$.
\textcolor{black}{
Note that for any $\eta>0$ and $A \s \cX$, the balls of radius $\nicefrac{\eta}{2}$ centered at the elements of an $\eta$-packing of $A$ intersected with $\cX$ are all disjoint and included in $\brb{ A+B_{\nicefrac{\eta}{2}}(\bzero) } \cap \cX$.
Thus, letting $P_i$ be a set of $\eta_i$-separated points included in $A_i := \cX_u\cap E_i$ with cardinality $\labs {P_i} = \cN(A_i, \eta_i)$, we have
\[
    \vol\Brb{ \brb{ A_i + B_{\nicefrac{\eta_i}{2}} ( \bzero ) } \cap \cX }
\ge
    \sum_{\bx \in P_i}
    \vol\brb{ B_{\nicefrac{\eta_i}{2}} (\bx) \cap \cX }
\ge
    \gamma \vol\brb{ B_{\nicefrac{\eta_i}{2}} (\bzero) } \cN\lrb{ A_i , \eta_i }
    \;,
\]
where the second inequality follows by Assumption~\ref{ass:geomCx}.
We now further upper bound the left-hand side.} 
Take an arbitrary point $\bx_i \in ( \cX_u \cap E_i + B_{\nicefrac{\eta_i}{2}} ) \cap \cX$. 
By definition of \mink{} sum, there exists $\bx_i' \in \cX_u \cap E_i$ such that $\lno{\bx_i - \bx'_i } \le \nicefrac{\eta_i}{2}$.
Hence 
$
    f(\bxs) - f(\bx_i) 
\le 
    f(\bxs) - f(\bx_i') + \babs{ f(\bx_i') - f(\bx_i) } 
\le 
    u + L (\nicefrac{\eta_i}{2}) 
\le 
    (\nicefrac{3}{2}) u$.
This implies that $\bx_i \in \cX_{(\nicefrac{3}{2})u}$, which proves the first inequality. 
For the second one, note that $\bx_2$ satisfies 
$
    f(\bxs) - f(\bx_2) 
\ge 
    f(\bxs) - f(\bx_2') - \babs{ f(\bx_2') - f(\bx_2) } 
\ge 
    w - L (\nicefrac{\eta_2}{2}) 
=
    (\nicefrac{1}{2}) w$.
\end{proof}

\section{Missing Proofs of Section~\ref{section:lower:bounds}}
\label{s:missing-lower}

In this section we provide all missing details and proofs from \Cref{section:lower:bounds}.

\subsection{Missing details in the Proof of Theorem~\ref{thm:lower:bound:sample:dependent}}
\label{s:missing-details-lower}

We claimed that the quantity $\tau$ introduced in \cref{eq:def:tau} lower bounds the sample complexity $\sigma(A,f,\e)$ of any certified algorithm $A$.
To prove this formally, fix an arbitrary certified algorithm $A$, let $N = \sigma(A,f,\e)$, and assume by contradiction
that $N < \tau$. Then we have $\err_N(A) \ge \inf_{A'} \brb{ \err_N(A') }> \e$ by definition of $\tau$. 
This means that there exists an $L$-\lip{} function $g$, coinciding with $f$ on $\bx_1, \ldots , \bx_N$ and such that $\max(g) - \bxs_N > \e$. 
Now, since $\bx_i$, $\bxs_i$, and $\xi_i$ are deterministic functions of the previous observations $f(\bx_1) = g(\bx_1), \ldots, f(\bx_{i-1})=g(\bx_{i-1})$ (for all $i=1,\ldots,N$), running $A$ on either $f$ or $g$ returns the same $\bx_i$, $\bxs_i$, and $\xi_i$ (for all $i=1,\ldots,N$).
Thus we have that $\sigma(A,g,\e) = \sigma(A,f,\e) = N$.
This, together with the fact that $A$ is a certified algorithm, implies that
$\e < \max(g) - \bxs_N \le \xi_N \le \e$, which yields a contradiction. \hfill \qedsymbol{}

\subsection{Proof of Proposition~\ref{prop:lower:bound:dim:one}}
\label{s:missing-lower-1}

Let $f$ be an arbitrary $L$-\lip{} function.
Let ${Q}=8$.
As for the proof of Theorem \ref{thm:lower:bound:sample:dependent}, it is sufficient to show that $\tau > c
S_\mathrm{C}(f,\e) / (1+m_{\epsilon})$, with $\tau$ defined in \eqref{eq:def:tau}.
If $c S_\mathrm{C}(f,\e) / (1+m_{\epsilon}) < 1$, then the result follows by $\tau \geq 1$. Consider then from now on that $c S_\mathrm{C}(f,\e) / (1+m_{\epsilon}) \geq 1$.

Defining $\tilde{\epsilon}$ as in the proof of \cref{thm:lower:bound:sample:dependent}, one can prove similarly that 
$c S_\mathrm{C}(f,\e)/(1+ m_{\epsilon}) \leq 
c \cN \lrb{ \cX_{ \tilde{\epsilon} }, \ \tilde{\epsilon}/2L }$.
From Lemma \ref{lem:changing radius},
\[
\cN \lrb{ \cX_{ \tilde{\epsilon} }, \ \frac{{Q} \tilde{\epsilon}}{L} }
\geq  \frac{1}{8{Q}}   \cN \lrb{ \cX_{ \tilde{\epsilon} }, \ \frac{\tilde{\epsilon}}{2L} }
\geq 
 \frac{1}{8{Q}} 
 \frac{S_\mathrm{C}(f,\e)}{m_{\epsilon} +1} \geq 12 \;,
\]
because $c = 1/96{Q}$ and $c S_\mathrm{C}(f,\e) / (1+m_{\epsilon}) \geq 1$.
Let now $n \leq c S_\mathrm{C}(f,\e) /(1+ m_{\epsilon})$.
Then we have
$n \leq c (8{Q}) \cN \lrb{ \cX_{ \tilde{\epsilon} }, \ {Q} \tilde{\epsilon}/L }$.
Thus, by $c (8{Q}) = 1/12$, $n \leq  \cN \lrb{ \cX_{ \tilde{\epsilon} }, \ {Q} \tilde{\epsilon}/L } /12$, and $\cN \lrb{ \cX_{ \tilde{\epsilon} }, \ {Q} \tilde{\epsilon}/L } \geq 12$, we have
\begin{equation} \label{eq:proof:lower:bound:n:leq:dim:one}
n \leq  
\left\lfloor
\frac{\cN \lrb{ \cX_{ \tilde{\epsilon} }, \ \frac{{Q} \tilde{\epsilon}}{L} }}{2}
\right\rfloor
- 4 \;.
\end{equation}

Consider a certified algorithm $A$ for $L$-\lip{} functions.
Let us consider a ${Q} \tilde{\epsilon} / L$ packing $\tilde x_1 < \tilde x_2 < \dots < \tilde x_N$ of $\mathcal{X}_{\tilde{\epsilon}}$ with $N = \cN \lrb{ \cX_{ \tilde{\epsilon}, \ {Q} \tilde{\epsilon}/L }}$.
Consider the $\lfl{ N/2 } - 1$ disjoint open segments $(\tilde x_1,\tilde x_3)$, $(\tilde x_3,\tilde x_5),$ $\ldots$, 
$(\tilde x_{2\lfloor N/2 \rfloor -3 }$, 
$\tilde x_{2\lfloor N/2 \rfloor -1})$.
Then from \eqref{eq:proof:lower:bound:n:leq:dim:one} there exists $i \in \bcb{1, 3, \ldots, 2 \lfl{ N/2 } - 3 }$ such that the segment $(\tilde x_i,\tilde x_{i+2})$ does not contain any of the points $x_1=x_1(A,f),\ldots,x_n=x_n(A,f)$ that $A$ queries when run on $f$.
Assume that $\tilde x_{i+1} - \tilde x_i \leq \tilde x_{i+2} - \tilde x_{i+1}$ (the case $\tilde x_{i+1} - \tilde x_i > \tilde x_{i+2} - \tilde x_{i+1}$ can be treated analogously; we omit these straightforward details for the sake of conciseness).
Consider the function $h_{+,\tilde{\epsilon}} \colon \cX \to \mathbb{R}$ defined by
{\small
\[
 h_{+,\tilde{\epsilon}}(x)
 =
\begin{cases} 
f(x) & ~ \text{if} ~ x \in \cX \backslash [\tilde x_i,\tilde x_{i+2}] \\
f(\tilde x_i) + L(x - \tilde x_i)  & ~ \text{if} ~ x \in \cX \cap [\tilde x_i , \tilde x_{i+1}] \\ 
f(\tilde x_i) + L(\tilde x_{i+1} - \tilde x_i)
+ (x - \tilde x_{i+1})
\frac{ 
f(\tilde x_{i+2}) - f(\tilde x_i) - L(\tilde x_{i+1} - \tilde x_i)
}{ \tilde x_{i+2} - \tilde x_{i+1} }
& ~ \text{if} ~ x \in \cX \cap (\tilde x_{i+1} , \tilde x_{i+2}] \;.
\end{cases}
\]
}%
We see that $h_{+,\tilde{\epsilon}}$ is $L$-Lipschitz (since $\tilde x_{i+1} - \tilde x_i \leq \tilde x_{i+2} - \tilde x_{i+1}$).
Furthermore, $h_{+,\tilde{\epsilon}}$ coincides with $f$ at all query points $x_1,\ldots,x_n$.
Similarly, consider the function $h_{-,\tilde{\epsilon}} \colon \cX \to \mathbb{R}$ defined by
{
\small
\[
 h_{-,\tilde{\epsilon}}(x)
 =
\begin{cases} 
f(x) & ~ \text{if} ~ x \in \cX \backslash [\tilde x_i,\tilde x_{i+2}] \\
f(\tilde x_i) - L(x - \tilde x_i)  & ~ \text{if} ~ x \in \cX \cap [\tilde x_i , \tilde x_{i+1}] \\ 
f(\tilde x_i) - L(\tilde x_{i+1} - \tilde x_i)
+ (x - \tilde x_{i+1})
\frac{ 
f(\tilde x_{i+2}) - f(\tilde x_i) + L(\tilde x_{i+1} - \tilde x_i)
}{ \tilde x_{i+2} - \tilde x_{i+1} }
& ~ \text{if} ~ x \in \cX \cap (\tilde x_{i+1} , \tilde x_{i+2}] \;.
\end{cases}
\]
}%
As before, $h_{-,\tilde{\epsilon}}$ is $L$-Lipschitz and coincides with $f$ on $x_1,\ldots,x_n$.

Let $x^\star_n = x^\star_n(A,f)$ be the recommendation of $A$ at round $n$ when run on $f$.

\underline{Case 1}: $x^\star_n \in \cX \backslash [\tilde x_i,\tilde x_{i+2}]$. Then, since $\tilde x_{i} \in \mathcal{X}_{\tilde{\epsilon}}$ and $\tilde x_{i+1} - \tilde x_i \geq {Q} \tilde{\epsilon} / L$, we have
\begin{align*}
h_{+,\tilde{\epsilon}}( \tilde x_{i+1} )
-
h_{+,\tilde{\epsilon}}( x^\star_n ) 
 =
f(\tilde x_i) + L(\tilde x_{i+1}-\tilde x_i)
- f( x^\star_n )
 \geq
- \tilde{\epsilon}
+ L \frac{{Q} \tilde{\epsilon}}{L}
=  7 \tilde{\epsilon} \;.
\end{align*}

\underline{Case 2}: $x^\star_n \in \cX \cap \bsb{ \tilde x_i,\fracc{(\tilde x_i+\tilde x_{i+1})}{2} }$. Then, since $\tilde x_{i+1} - \tilde x_i \geq {Q} \tilde{\epsilon} / L$, we have
\begin{align*}
h_{+,\tilde{\epsilon}}( \tilde x_{i+1} )
-
h_{+,\tilde{\epsilon}}( x^\star_n ) 
& =
f(\tilde x_i) + L(\tilde x_{i+1}-\tilde x_i)
- 
f(\tilde x_i)
-
L
(x^\star_n - \tilde x_i)
\geq
 L \frac{\tilde x_{i+1}-\tilde x_i}{2}
 \geq
 4 \tilde{\epsilon} \;.
\end{align*}

\underline{Case 3}: $x^\star_n \in \cX \cap \bsb{ (\tilde x_i+\tilde x_{i+1})/2,\tilde x_{i+1} }$. Then, since $\tilde x_{i+1} - \tilde x_i \geq {Q} \tilde{\epsilon} / L$, we have
\begin{align*}
h_{-,\tilde{\epsilon}}( \tilde x_{i} )
-
h_{-,\tilde{\epsilon}}( x^\star_n ) 
& =
f(\tilde x_i) - 
f(\tilde x_i)
+
L(x^\star_n - \tilde x_i)
 \geq
 L \frac{\tilde x_{i+1}-\tilde x_i}{2}
 \geq
4 
 \tilde{\epsilon} \;.
\end{align*}

\underline{Case 4}: $x^\star_n \in \cX \cap \bsb{ \tilde x_{i+1},(\tilde x_{i+1}+\tilde x_{i+2})/2 }$. Then, since $\tilde x_{i+1} - \tilde x_i \geq {Q} \tilde{\epsilon} / L$,
since $\tilde x_i,\tilde x_{i+2} \in \mathcal{X}_{\tilde{\epsilon}}$,
and since $h_{-,\tilde{\epsilon}}$ is linear increasing on $[\tilde x_{i+1},\tilde x_{i+2}]$ with left value $f(\tilde x_i) - L(\tilde x_{i+1}-\tilde x_i)$ and right value $f(\tilde x_{i+2})$,
we have
\begin{align*}
h_{-,\tilde{\epsilon}}( \tilde x_{i} )
-
h_{-,\tilde{\epsilon}}( x^\star_n ) 
& \geq 
f(\tilde x_i) - 
\frac{
f(\tilde x_i) - L(\tilde x_{i+1}-\tilde x_i) + f(\tilde x_{i+2})
}{
2
}
\\
& =
\frac{ f(\tilde x_i) - f(\tilde x_{i+2})}{2}
+ L \frac{\tilde x_{i+1}-\tilde x_i}{2}
 \geq
-\frac{\tilde{\epsilon}}{2}
+
\frac{{Q}}{2} \tilde{\epsilon}
 \geq  3 \tilde{\epsilon} \;.
\end{align*}

\underline{Case 5}: $x^\star_n \in \cX \cap \bsb{ (\tilde x_{i+1}+\tilde x_{i+2})/2,\tilde x_{i+2} }$. Then, since $\tilde x_{i+1} - \tilde x_i \geq {Q} \tilde{\epsilon} / L$,
since $\tilde x_i,\tilde x_{i+2} \in \mathcal{X}_{\tilde{\epsilon}}$
and since $h_{+,\tilde{\epsilon}}$ is linear decreasing on $[\tilde x_{i+1},\tilde x_{i+2}]$ with left value $f(\tilde x_i) + L(\tilde x_{i+1}-\tilde x_i)$ and right value $f(\tilde x_{i+2})$, we have
\begin{align*}
h_{+,\tilde{\epsilon}}( \tilde x_{i+1} )
-
h_{+,\tilde{\epsilon}}( x^\star_n ) 
& \geq 
f(\tilde x_i) + L(\tilde x_{i+1}-\tilde x_i)
- 
\frac{
f(\tilde x_i) + L(\tilde x_{i+1}-\tilde x_i)
+
f(\tilde x_{i+2})
}{
2
}
\\
& =
\frac{ f(\tilde x_i) - f(\tilde x_{i+2})}{2}
+ L \frac{\tilde x_{i+1}-\tilde x_i}{2}
 \geq
-\frac{\tilde{\epsilon}}{2}
+
\frac{{Q}}{2} \tilde{\epsilon}
 \geq  3 \tilde{\epsilon} \;.
\end{align*}

Putting all cases together and recalling the definition of $\err_n(A)$ in the proof of \cref{thm:lower:bound:sample:dependent}, we then obtain $\err_n(A)  \ge 3 \tilde{\epsilon} > \epsilon $. 
Being $A$ arbitrary, this implies $\inf_{A'} \err_n(A')  > \epsilon $.
Since this has been shown for any $n \leq c S_\mathrm{C}(f,\e) /(1+ m_{\epsilon})$ we thus have $\tau 
> 
c S_\mathrm{C}(f,\e) /(1+m_{\epsilon})$. \hfill\qedsymbol{}

\subsection{The \piya{}-Shubert Algorithm and Proof of Proposition~\ref{prop:counter:example:Nepsilon}}
\label{s:missing-lower-2}

\paragraph{The \piya{}-Shubert Algorithm.}
In this section, we recall the definition of the certified \piya{}-Shubert algorithm (\cref{alg:piya}, \citealt{Piy-72-AbsoluteExtremum,shubert1972sequential}) and we show that if $\Lip(f)=L$ (i.e., if the best \lip{} constant of $f$ is known exactly by the algorithm) the sample complexity can be constant in dimension $d\ge 2$ (\cref{prop:counter:example:Nepsilon}).

\begin{algorithm2e}
\DontPrintSemicolon
\SetAlgoNoLine
\SetAlgoNoEnd
\SetKwInput{kwIn}{input}
\kwIn{\lip{} constant $L>0$, norm $\lno{\cdot}$, initial guess $\bx_1 \in \cX$}
\For
{%
    $i = 1, 2, \ldots$
}
{
    pick the next query point $\bx_i$\;
    observe the value $f(\bx_i)$\;
    output the recommendation $\bxs_i \gets \argmax_{\bx\in\{\bx_1, \ldots, \bx_i\}} f(\bx) $\;
    output the error certificate $\xi_i = \widehat f^\star_{i} - f^\star_{i}  $, where $\widehat f_i(\cdot) \gets \min_{j\in[i]} \bcb{ f(\bx_j) + L \lno{ \bx_j - (\cdot) } }$, 
    $\widehat f ^\star_i \gets \max_{\bx \in \cX} \widehat f_i(\bx)$,
    $f^\star_i \gets \max_{j\in[i]} f(\bx_j)$, 
    and let $\bx_{i+1} \in \argmax_{\bx \in \cX} \widehat f_i(\bx)$\;
}
\caption{\label{alg:piya} Certified \piya{}-Shubert algorithm (PS)}
\end{algorithm2e}

\paragraph{Proof of \cref{prop:counter:example:Nepsilon}.}
Fix any $\e \in (0,\eo \textcolor{black}{)}$ and any $L$-\lip{} function $f$.
Since $f$ is $L$-\lip{}, then $\max_{\bx \in \cX} \widehat f_i(\bx) \ge \max_{\bx \in \cX} f (\bx)$ for all $i \in \Ns$. 
Hence $\max_{\bx\in\cX} f(x) - f(\bxs_i) \le \max_{\bx\in\cX} \widehat f_i(\bx) - f^\star_i = \xi_i$.
This shows that the certified \piya{}-Shubert algorithm is indeed a certified algorithm.
Then, if $f:=L\lno \cdot$ and $\bx_1 := \bzero$, we have that $\widehat f_1 = f$, $\xi_1 = L$, and $\bx_2$ belongs to the the unit sphere, i.e., $\bx_2$ is a maximizer of $f$.
Since $\widehat f_2 = f$, we have that $\xi_2 = 0$, hence $\sigma(\mathrm{PS}, f, \e) = 2$.
Finally, by definition \eqref{eq:certified:bound}, we have $S_\mathrm{C}(f,\e) \geq \cN( \mathrm{argmax}_{\mathcal{X}} f , \e/L ) $. 
Since $\mathrm{argmax}_{\mathcal{X}} f$ is the unit sphere, there exists a constant $\textcolor{black}{c_d}$, only depending on $d$, $\lno{\cdot}$ and $L$, such that $\SC(f,\e) \geq \textcolor{black}{c_d} / \e^{d-1}$. \hfill\qedsymbol{}

We give some intuition on \cref{prop:counter:example:Nepsilon}. 
Consider a function $f$ that has Lipschitz constant exactly $L$, and a pair of points in $\mathcal{X}$ whose respective values of $f$ are maximally distant, that is the difference of values of $f$ is exactly $L$ times the norm of the input difference.
This configuration provides strong information on the value of the global maximum of $f$, as is illustrated in the proof of Proposition \ref{prop:counter:example:Nepsilon}. Another interpretation is that when $f$ has Lipschitz constant exactly $L$, there is less flexibility for the $L$-Lipschitz function $g$ that yields the maximal optimization error in $\err_n(A)$ (introduced in the proof of \Cref{thm:lower:bound:sample:dependent}).

\section{Comparison with the classical non-certified setting}
\label{s:comparison-non-certif}

For the interested reader who is not familiar with DOO, in this section, we recall and analyze the classical non-certified version of this algorithm.
As mentioned in \cref{rem:doo:suboptimal:certified}, our analysis is slightly tighter than that of \cite{munos2011optimistic}, and serves as a better comparison for highlighting the differences between the certified and the non-certified settings (see Remark~\ref{rmk:comparison} below).

The difference between our certified version $\cdoo$ and the classical non-certified DOO algorithm (denoted by $\ncdoo$ below) is that the latter does not output any certificates $\xi_1,\xi_2,\ldots$.
In other words, $\ncdoo$ coincides with \cref{alg:DOO} except for  \cref{state:firstCertif,state:doocertificate}.
In particular, it outputs the same query points $\bx_1,\bx_2,\ldots$ and recommendations $\bx_1^\star,\bx_2^\star,\ldots$ as $\cdoo$. The performance of this non-certified algorithm is classically measured by the non-certified sample complexity \eqref{eq:defzeta}, i.e., the smallest number of queries needed before outputting an $\e$-optimal recommendation.

\begin{proposition} \label{prop:DOO:non:certified}
If \cref{assumption:DOO:small:cells,assumption:DOO:well-separated} hold, the non-certified sample complexity of the non-certified DOO algorithm $\ncdoo$
satisfies, for all \lip{} functions $f \in \cF_L$\footnote{Our proof can be easily adapted to the weaker assumption that $f$ is only $L$-\lip{} around a maximizer.} and any accuracy $\e\in (0,\eo]$,
\[
\zeta(\ncdoo,f,\e)
\leq
1 + {C_d} \sum_{k=1}^{\me} \cN \lrb{ \cX_{(\e_k, \e_{k-1}]}, \, \frac{\e_k}{L} } \;, 
\]
where ${C_d} = K
\brb{
\mathbf{1}_{ \nu/ R \geq 1 }
+
\mathbf{1}_{ \nu/ R < 1 }
(
\nicefrac{4 R}{\nu}
)^d
}$.
\end{proposition}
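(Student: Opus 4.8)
The plan is to mirror the proof of \cref{prop:DOO:certified}, adjusting only the stopping criterion so that it tracks when the \emph{recommendation} (rather than the certificate) first becomes $\e$-optimal; this change is exactly what removes the extra $\cN(\cX_\e,\e/L)$ term and the additive constants present in the certified bound.

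First I would recall that $\ncdoo$ queries the same points and selects the same cells as $\cdoo$, so the argument leading to \eqref{eq:DOO:selected:points:goodbis} applies verbatim: every selected representative satisfies $f(\bx_{h^\star_\ell,i^\star_\ell})+LR\delta^{h^\star_\ell}\ge \max(f)$, hence $\bx_{h^\star_\ell,i^\star_\ell}\in\cX_{LR\delta^{h^\star_\ell}}$ as in \eqref{eq:DOO:selected:points:good}. Along the sequence $((h^\star_\ell,i^\star_\ell))_\ell$ of selected cells, I would then define the stopping time $\ell^\star:=\inf\{\ell\in\Ns:\bx_{h^\star_\ell,i^\star_\ell}\in\cX_\e\}$, the first iteration whose selected representative is $\e$-optimal. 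This is finite: once a selected cell is deep enough that $LR\delta^{h^\star_\ell}\le\e$, its representative lies in $\cX_{LR\delta^{h^\star_\ell}}\subseteq\cX_\e$.

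The first key step is to convert $\ell^\star$ into a bound on $\zeta(\ncdoo,f,\e)$. Since the representative of a selected cell is queried at the moment that cell is added as a child (or at initialization for the root), the representative $\bx_{h^\star_{\ell^\star},i^\star_{\ell^\star}}$ is queried among the first $1+K(\ell^\star-1)$ evaluations of $f$ (one query for the root, plus at most $K$ children per earlier iteration). Because $\bxs_n$ is the $\argmax$ of the observed values and $\bx_{h^\star_{\ell^\star},i^\star_{\ell^\star}}\in\cX_\e$, the recommendation is $\e$-optimal from that round on, so $\zeta(\ncdoo,f,\e)\le 1+K(\ell^\star-1)$.

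It remains to bound $\ell^\star-1$, the number of representatives selected strictly before $\ell^\star$. By definition of $\ell^\star$, each of these lies in $\cX_\e^c$, and, combined with \eqref{eq:DOO:selected:points:good} and the fact that every point of $\cX$ is $\eo$-optimal, it falls into one of the layers $\cX_{(\e_k,\e_{k-1}]}$, $k\in[\me]$. For two distinct such representatives $\bx_{h,j},\bx_{h',j'}$ in the same layer $\cX_{(\e_k,\e_{k-1}]}$, \eqref{eq:DOO:selected:points:good} forces $LR\delta^{h},LR\delta^{h'}>\e_k$, so \cref{assumption:DOO:well-separated} yields $\lno{\bx_{h,j}-\bx_{h',j'}}\ge \nu\delta^{\max(h,h')}>\nu\e_k/(LR)$; hence the representatives in that layer form a $(\nu\e_k/(LR))$-packing of $\cX_{(\e_k,\e_{k-1}]}$, whose cardinality is at most $\cN(\cX_{(\e_k,\e_{k-1}]},\nu\e_k/(LR))$. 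Rescaling the radius from $\nu\e_k/(LR)$ to $\e_k/L$ via \cref{lem:changing radius}, exactly as for \eqref{eq:DOO:non:certified:Nei:smaller}, produces the factor $\mathbf{1}_{\nu/R\ge1}+\mathbf{1}_{\nu/R<1}(\nicefrac{4R}{\nu})^d$; summing over $k\in[\me]$ then bounds $\ell^\star-1$ by $(\mathbf{1}_{\nu/R\ge1}+\mathbf{1}_{\nu/R<1}(\nicefrac{4R}{\nu})^d)\sum_{k=1}^{\me}\cN(\cX_{(\e_k,\e_{k-1}]},\e_k/L)$, and multiplying by $K$ and adding $1$ gives the claim with $C=K(\mathbf{1}_{\nu/R\ge1}+\mathbf{1}_{\nu/R<1}(\nicefrac{4R}{\nu})^d)$. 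The one genuinely new point relative to \cref{prop:DOO:certified} — and the step I would be most careful about — is the implication ``selected representative in $\cX_\e$ $\Rightarrow$ recommendation already $\e$-optimal at an earlier or equal round,'' together with the observation that representatives selected before $\ell^\star$ avoid $\cX_\e$, which is precisely what eliminates the $\cN(\cX_\e,\e/L)$ summand.
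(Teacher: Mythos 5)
Your proposal is correct and takes essentially the same approach as the paper's proof: your stopping time $\ell^\star$ is exactly the paper's stopping criterion (with its $I_\e$ equal to your $\ell^\star-1$), your conversion $\zeta(\ncdoo,f,\e)\le 1+K(\ell^\star-1)$ matches \eqref{eq:DOO:zeta:smaller:non:certified}, and your layer-wise packing argument with the rescaling via \cref{lem:changing radius} coincides with \eqref{eq:DOO:E:in:cup:non:certified}--\eqref{eq:DOO:non:certified:Nei:smaller:non:certified}. The step you single out as the genuinely new one (selected representative in $\cX_\e$ implies the recommendation is already $\e$-optimal, and earlier selected representatives avoid $\cX_\e$) is indeed precisely how the paper eliminates the $\cN(\cX_\e,\e/L)$ term.
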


\begin{proof}
The proof of \cref{prop:DOO:certified} (Section~\ref{s:missing-upper-DOO}), from the beginning to \eqref{eq:DOO:selected:points:goodbis}, implies that,
for any $(h^\star,i^\star)$ in \cref{eq:DOO:select:cell} of \cref{alg:DOO},
\begin{equation} \label{eq:DOO:selected:points:good:non:certified}
    f(\bx_{h^\star,i^\star}) \in \cX_{ L R \delta^{h^\star} }.
\end{equation}
The guarantee \eqref{eq:DOO:selected:points:good:non:certified} is classical (e.g., \citealt{munos2011optimistic}). 

We now proceed in a direction that is slightly different from the proof of \citet[Theorem~1]{munos2011optimistic}. Consider the first time at which the DOO algorithm reaches \cref{eq:DOO:select:cell} with $f(\bx_{h^\star,i^\star}) \geq f(\bx^\star) - \epsilon$. Then let $I_{\e}$ be the number of times the DOO algorithm went through \cref{eq:DOO:select:cell} strictly before that time, and denote by $n_{\e}$ the total number of evaluations of $f$ strictly before that same time. We have
\[
n_{\e} \leq 
1 + K I_{\e} \;. 
\]
Furthermore, after $n_\e$ evaluations of $f$, we have, by definitions of the recommendation $\bxs_{n_\e}$ and $n_\e$,
\[
f(\bxs_{n_\e}) = \max_{\bx \in \{\bx_1,\ldots,\bx_{n_\e}\}} f(\bx) \geq f(\bx_{h^\star,i^\star}) \geq f(\bxs)-\e \;.
\]
This inequality entails that the non-certified sample complexity of $\ncdoo$ is bounded by $n_{\e}$ and thus
\begin{equation} \label{eq:DOO:zeta:smaller:non:certified}
\zeta( \ncdoo,f,\e)
\leq 
1 + K I_{\e}.
\end{equation}

We now bound $I_\e$ from above, and assume without loss of generality that $I_\e \ge 1$. Consider now the sequence $(h^\star_1,i^\star_1), \ldots , (h^\star_{I_\e},i^\star_{I_\e})$ corresponding to the first $I_{\e}$ times the DOO algorithm $\ncdoo$ went through \cref{eq:DOO:select:cell}. Let $\mathcal{E}_{\e}$ be the corresponding finite set $\{ \bx_{h^\star_1,i^\star_1} , \ldots , \bx_{h^\star_{I_\e},i^\star_{I_\e}} \}$ (a leaf can never be selected twice). By definition of $I_\e$, we have $\mathcal{E}_{\e} \subseteq \cX_{(\e,\e_0]}$. Since $\epsilon = \epsilon_{m_\e} \leq \epsilon_{m_\e-1} \leq \ldots \leq \epsilon_0$, we have $\mathcal{E}_{\e}
\subseteq
\bigcup_{i=1}^{\me} 
\cX_{\left(\e_i, \ \e_{i-1} \right]}$, so that the cardinality $I_\e$ of $\mathcal{E}_{\e}$ satisfies
\begin{equation} \label{eq:DOO:E:in:cup:non:certified}
I_\e = \card(\mathcal{E}_{\e}) \leq \sum_{i=1}^{\me} 
\card \left(\mathcal{E}_{\e} \cap \cX_{\left(\e_i, \ \e_{i-1} \right]}\right) \;.
\end{equation}
Let $N_{\e,i}$ be the cardinality of $\mathcal{E}_{\e} \cap \cX_{\left(\e_i, \ \e_{i-1} \right]}$.
The same arguments as from \eqref{eq:DOO:certified:E:in:cup} to \eqref{eq:DOO:non:certified:Nei:smaller} in the proof of \cref{prop:DOO:certified} yield
\begin{equation} \label{eq:DOO:non:certified:Nei:smaller:non:certified}
N_{\e,i} 
\leq 
\left(
\mathbf{1}_{ \nu/ R \geq 1 }
+
\mathbf{1}_{ \nu/ R < 1 }
\left(
\frac{4 R}{\nu}
\right)^d
\right)
\cN
\left(
\cX_{\left(\e_i,
 \ \e_{i-1} \right]}
,
\frac{ \e_i }{ L    }
\right).
\end{equation}
Combining the last inequality with \eqref{eq:DOO:zeta:smaller:non:certified} and \eqref{eq:DOO:E:in:cup:non:certified} concludes the proof.
\end{proof}

\begin{remark} \label{rem:doo:suboptimal:non:certified}
The analysis of the DOO algorithm in \citet[Theorem 1]{munos2011optimistic} (non-certified version) yields a bound on the non-certified sample complexity \eqref{eq:defzeta} than can be expressed in the form $1+C \sum_{k=1}^{\me} \cN \lrb{ \cX_{\e_{k-1}}, \, \frac{\e_k}{L} }$, with a constant $C$. The corresponding proof relies on two main arguments. First, when a cell of the form $(h^\star,i^\star)$, $i^\star \in \{0,\ldots,K^{h^\star}-1\}$, is selected in \cref{eq:DOO:select:cell} of \cref{alg:DOO}, then the corresponding cell \representative{} $\bx_{i^\star,h^\star}$ is $LR \delta^{h^\star}$-optimal (we also use this argument).
Second, as a consequence, for a given fixed value of $h^\star$, for the sequence of values of $i^\star$ that are selected in \cref{eq:DOO:select:cell} of \cref{alg:DOO}, the corresponding cell \representative{}s $\bx_{h^\star,i^\star}$ form a packing of $\cX_{L R \delta^{h^\star}}$. 

Our slight refinement in the proof of \cref{prop:DOO:non:certified} stems from the observation that using a packing of $\cX_{ L R \delta^{h^\star}}$ yields a suboptimal analysis, since the cell \representative{}s $\bx_{h^\star,i^\star}$ can be much better than $L R \delta^{h^\star}$-optimal. Hence, we proceed differently from \cite{munos2011optimistic}, by first partitioning all the selected cell \representative{}s (in \cref{eq:DOO:select:cell} of \cref{alg:DOO}) according to their level of optimality as in \eqref{eq:DOO:E:in:cup:non:certified} and then by exhibiting packings of the different layers of input points $\cX_{(\epsilon,\epsilon_{m_{\e}-1}]},\cX_{(\e_{m_{\e}-1},\e_{m_\e-2}]},\ldots,\cX_{(\e_1,\e_0]}$. In a word, we partition the values of $f$ instead of partitioning the input space when counting the representatives selected at all levels.
\end{remark}

\begin{remark}
\label{rmk:comparison}
In the Introduction, below \cref{eq:defzeta}, we mentioned the inherent difference between the sample complexity $\sigma(A,f,\e)$ in the certified setting and the more classical sample complexity $\zeta(A,f,\e)$ in the non-certified setting.
We can now make our statements more formal.

Our paper shows that in the certified setting, the sample complexity $\sigma(A,f,\e)$ of an optimal algorithm~$A$ (e.g., $A=\cdoo$) is characterized by the quantity 
\[
    \SC(f,\e) 
:=
    \cN \lrb{ \cX_{\e}, \frac{\e}{L} } + \sum_{k=1}^{\me} \cN \lrb{ \cX_{(\e_k, \e_{k-1}]}, \, \frac{\e_k}{L} } \;.
\]
In contrast, the previous proposition shows that in the non-certified setting the sample complexity $\zeta(\ncdoo,f,\e)$ of the $\ncdoo$ algorithm is upper bounded (up to constants) by
\[
    \SNC(f,\e)
:=
    \sum_{k=1}^{\me} \cN \lrb{ \cX_{(\e_k, \e_{k-1}]}, \, \frac{\e_k}{L} } \;.
\]
The two expressions look remarkably alike but are subtly very different.
In fact, the latter depends only on the ``size'' (i.e., the packing numbers) of suboptimal points.
The former has an additional term measuring the size of near-optimal points.
Now, note that the flatter a function is, the fewer suboptimal points there are.
This implies that the sum $\sum_{k=1}^{\me} \cN \lrb{ \cX_{(\e_k, \e_{k-1}]}, \, \frac{\e_k}{L} }$ becomes very small (hence, so does $\SNC(f,\e)$), but in turn, the set of near-optimal points $\cX_\e$ becomes large (hence, so does $\SC(f,\e)$). For instance, in the extreme case of constant functions $f$, we have $\SNC(f,\e) = 0$ but $\SC(f,\e) \approx (L/\e)^d$.
This fleshes out the fundamental difference between certified and non-certified optimization, giving formal evidence to the intuition that the more ``constant'' a function is, the easier it is to recommend an $\e$-optimal point, but the harder it is to certify that such recommendation is actually a good recommendation.  
\end{remark}

\end{document}